\theoremstyle{definition}
\newtheorem{definition}{Definition}
\newtheorem*{remarkson}{Remarks}
\theoremstyle{plain}
\newtheorem{theorem}{Theorem}
\newtheorem{lemma}[definition]{Lemma}
\newtheorem{corollary}{Corollary}
\theoremstyle{remark}
\newcommand{\C}{\mathbb{C}}
\newcommand{\G}{\mathbb{G}}
\renewcommand{\H}{\mathbb{H}}
\newcommand{\N}{\mathbb{N}}
\newcommand{\Pb}{\mathbb{P}}
\newcommand{\Q}{\mathbb{Q}}
\newcommand{\R}{\mathbb{R}}
\newcommand{\Z}{\mathbb{Z}}
\newcommand{\Hcal}{\mathcal{H}}
\newcommand{\Mcal}{\mathcal{M}}
\newcommand{\Ocal}{\mathcal{O}}
\newcommand{\Tcal}{\mathcal{T}}
\newcommand{\trace}{\operatorname{trace}\,}
\newcommand{\diag}{\operatorname{diag}\,}
\newcommand{\Sp}{\operatorname{Sp}}
\newcommand{\im}{\operatorname{Im}}
\let\leq\leqslant
\let\geq\geqslant
\begin{document}

\begin{center}
\begin{huge}
\begin{spacing}{1.0}
\textbf{The Maaß Space for Paramodular Groups}  
\end{spacing}
\end{huge}

\bigskip
by
\bigskip

\begin{large}
\textbf{Bernhard Heim\footnote{Bernhard Heim, GUtech, Way No. 36, Building No. 331, North Ghubrah, Muscat, Sultanate of Oman, bernhard.heim@gutech.edu.om}} and 
\textbf{Aloys Krieg\footnote{Aloys Krieg, Lehrstuhl A für Mathematik, RWTH Aachen University, D-52056 Aachen, krieg@rwth-aachen.de}}
\end{large}
\vspace{0.5cm}\\
12 March 2018
\vspace{1cm}
\end{center}
\begin{abstract}
In this paper we describe a characterization for the Maaß space associated with the paramodular group of degree $2$ and squarefree level $N$. As an application we show that the Maaß space is invariant under all Hecke operators. As a consequence we conclude that the associated Siegel-Eisenstein series belongs to the Maaß space. 
\end{abstract}
\noindent\textbf{2010 Mathematics Subject classification: 11F55, 11F46}
\vspace{2ex}\\
\noindent\textbf{Keywords:} Paramodular group; Maaß space; Hecke operator; Eisenstein series

\newpage
\section{Introduction}

Paramodular forms have first been studied by Siegel \cite{Si} in the sixties. Recently paramodular forms of degree two and squarefree level $N$ came into focus again. For example
\begin{itemize}
 \item The paramodular conjecture by Brumer and Kramer \cite{BK}
 \item New and old form theory by Roberts and Schmidt \cite{RS2}
 \item The Böcherer conjecture for paramodular forms \cite{RT}
 \item Borcherds lifts \cite{Bo}
\end{itemize}
Most of the results have in common that a certain subspace of generalized Saito-Kurokawa lifts, the Maaß space, plays a significant role. Although several authors have studied these lifts in the orthogonal setting (cf. Sugano \cite{Su} or Oda \cite{O} or \cite{HM1}) or as a direct generalization of the original approach of Maaß by Gritsenko (\cite{G1}, \cite{G3}, \cite{G2}), not all results are stated in an explicit form such that they can easily be applied in the works above and that certain refined properties had not been worked out.

In this paper we give a characterization of the Maaß space for the paramodular group similar to \cite{H} in the case of the Siegel modular group. This characterization is applied in order to show that the Maaß space is invariant under the full Hecke algebra described in \cite{GK}. As an application we can show that the Siegel-Eisenstein series always belongs to the Maaß space.

\section{The Maaß space}

Let 
\[
 \H_2:=\left\{Z=\begin{pmatrix}
                 \tau & z  \\  z & \tau'
                \end{pmatrix}
\in \C^{2\times 2};\;Z=Z^{tr},\,\im Z>0\right\}
\]
denote the Siegel half-space of degree $2$ and
\begin{align*}
 & \Sp_2(\R):=\{M\in\R^{4\times 4};\, J[M]:=M^{tr}JM=J\},\\[1ex]
 & J= \begin{pmatrix}
 0 & -I  \\  I & 0
\end{pmatrix}, \;\;
I=\begin{pmatrix}
   1 & 0  \\  0 & 1
  \end{pmatrix}
\end{align*}
the symplectic group of degree $2$. $\Sp_2(\R)$ acts on $\H_2$ in the usual way
\[
 Z\mapsto M\langle Z\rangle:=(AZ+B)(CZ+D)^{-1},\;\; M=\begin{pmatrix}
                                                       A & B  \\  C & D
                                                      \end{pmatrix},
\]
and on the set of holomorphic functions $f:\H_2\to \C$ for $k\in\Z$ via
\[
 f(Z)\mapsto f \underset{k}{\mid} M(Z):=\det (CZ+D)^{-k} f(M\langle Z\rangle). 
\]
Given $N\in\N$ let 
\[
 \Sigma_N:=\left\{\begin{pmatrix}
                   * & *N & * & *  \\ * & * & * & */N  \\ * & *N & * & *  \\  *N & *N & *N & *
                  \end{pmatrix}
\in \Sp_2(\Q);\;*\in\Z\right\}
\]
be the \textit{paramodular group of degree $2$ and level $N$}. A modular form $f\in \Mcal_k(\Sigma_N)$ is a holomorphic function $f:\H_2\to\C$ satisfying
\[
 f \underset{k}{\mid} M = f\;\;\text{for all}\;\;  M\in\Sigma_N
\]
and possesses a Fourier expansion of the form 
\[
  f(Z) = \sum_{T\geq 0} \alpha_f(T)\, e^{2\pi i\trace (TZ)}, \quad T = \begin{pmatrix}
                                                                n & r/2 \\  r/2 & mN
                                                               \end{pmatrix},\;\;
 \begin{matrix}
  n,m\in\N_0, \;r\in\Z ,\\
  4nm N-r^2\geq 0,\;\;\;
 \end{matrix}
\]
and a Fourier Jacobi expansion of the form
\begin{gather*}\tag{1}\label{gl_1}
 f(Z) = \sum^{\infty}_{m=0} f^*_m (Z) = \sum^{\infty}_{m=0} f_{mN}(\tau,z)\, e^{2\pi imN\tau'}, \quad Z = \begin{pmatrix}
                                                                \tau & z \\  z & \tau'
                                                               \end{pmatrix} \in\H_2,
\end{gather*}
where
\begin{gather*}
 f_{mN}(\tau,z) = \sum_{n,r} \alpha_f(T)\, e^{2\pi i(n\tau+rz)}.
\end{gather*}
We will refer to both $f_{mN}$ and $f^*_{mN}$ as Jacobi forms. Next we define the Maaß space $\Mcal^*_k(\Sigma_N)$ to consist of all $f\in \Mcal_k(\Sigma_N)$ such that for all $T\neq 0$
\begin{gather*}\tag{2}\label{gl_2}
\alpha_f(T) = \sum_{\delta\,|\,gcd(n,r,m)} \delta^{k-1} \alpha_f(nm/\delta^2,r/\delta,N),
\end{gather*}
where $\alpha_f(n,r,mN)$ denotes the Fourier coefficient of $\left(\begin{smallmatrix}
                                                                   n & r/2 \\  r/2 & mN 
                                                                   \end{smallmatrix}\right)$. 
 
If $J_{k,N}$ denotes the space of Jacobi forms of weight $k$ and index $N$ in the sense of Eichler-Zagier \cite{EZ}, Gritsenko \cite{G1} demonstrated that the map
\[                                                                                                                            \varphi: \Mcal^*_k (\Sigma_N) \to J_{k,N}, \quad f\mapsto f_N,
\]
is an isomorphism, where we have for even $k\geq 4$
\begin{gather*}\tag{3}\label{gl_3}
f_0(\tau,z) = f^*_0(Z) = \alpha_f(0,0,0) E_k(\tau), \quad \alpha_f(0,0,N) = -\frac{2k}{B_k}\,\alpha_f(0,0,0)
\end{gather*}
with the normalized elliptic Eisenstein series
\[
E_k(\tau) = 1-\frac{2k}{B_k} \sum^{\infty}_{n=1} \sigma_{k-1}(n)\, e^{2\pi in\tau}.
\]
Consider the embedding
\begin{gather*}\tag{4}\label{gl_4}
\begin{split}
 SL_2(\R) \times SL_2(\R) & \hookrightarrow \Sp_2(\R), \\[0.5ex]
\begin{pmatrix}
        a & b  \\  c & d
       \end{pmatrix} \times 
      \begin{pmatrix}
        a' & b'  \\  c' & d'
       \end{pmatrix} & : =
 \begin{pmatrix}
  a & 0 & b & 0 \\0 & a' & 0 & b'/N  \\  c & 0 & d & 0  \\  0 & c'N & 0 & d'
 \end{pmatrix}.
 \end{split}
\end{gather*}
Let 
\[
 \Mcal(n):=\left\{\frac{1}{\sqrt{n}} M\in SL_2(\R);\;M\in\Z^{2\times 2}\right\}, \;\; n\in\N.
\]
Here $\Gamma = SL_2(\Z)=\Mcal(1)$.

We define for $f\in \Mcal_k(\Sigma_N)$
\begin{gather*}\tag{5}\label{gl_5}
\begin{split}
f \underset{k}{\mid} \Tcal_n^{\uparrow}: & = \sum_{M:\Gamma\backslash \Mcal(n)} f\underset{k}{\mid} M\times I , \\
f \underset{k}{\mid} \Tcal_n^{\downarrow}: & = \sum_{M:\Gamma\backslash \Mcal(n)} f\underset{k}{\mid} I\times M .
\end{split}
\end{gather*}
Note that \eqref{gl_4} and \eqref{gl_5} are not Hecke operators on $\Mcal_k(\Sigma_N)$. Therefore we need the so called \textit{Jacobi group} inside $\Sigma_N$ given by
\[
 \Sigma_N^J:=\left\{\begin{pmatrix}
                     * & 0 & * & * \\ * & \pm 1 & * & */N \\ * & 0 & * & * \\ 0 & 0 & 0 & \pm 1
                    \end{pmatrix}
\in \Sigma_N\right\}.
\]
Then the elements $f_{mN} \in J_{k,m}$ can be characterized by
\[
 f^*_{mN} \underset{k}{\mid} M=f^*_{mN} \quad \text{for all}\quad M\in \Sigma_N^J.
\]
We define Hecke operators just as in \cite{F} without any normalizing factors by
\[
f \underset{k}{\mid}\Gamma M\Gamma:=\sum_{L:\Gamma\backslash \Gamma M\Gamma}  f \underset{k}{\mid} L,
\]
whenever $f$ is invariant under the subgroup $\Gamma$ of $\Sp_2(\R)$ and if $M\in\Sp_2(\R)$ such that the sum is finite.

\begin{lemma}\label{lemma 1} 
Let $f\in \Mcal_k(\Sigma_N)$, $N\in \N$ and $p\in \Pb$ be a prime. Then the following assertions are equivalent
\begin{flalign*}
(i) \quad & f \underset{k}{\mid} \Tcal_p^{\uparrow} = f \underset{k}{\mid} \Tcal_p^{\downarrow}. & &\\
(ii)\quad & \alpha_f(n,r,pmN) + p^{k-1} \alpha_f(n,\tfrac{r}{p},\tfrac{m}{p}N)  & &\\ 
	  \quad &  = \alpha_f(pn,r,mN) + p^{k-1} \alpha_f(\tfrac{n}{p},\tfrac{r}{p},mN)\;\;\text{for all}\;\; n,r,m.         & &\\ 
(iii)\quad & \alpha_f(p^{\nu}n,p^{\rho}r,p^{\mu}mN) = \sum^{\min(\nu,\rho,\mu)}_{\delta=0} p^{\delta(k-1)} \alpha_f(p^{\nu+\mu-2\delta}n,p^{\rho-\delta}r,mN) ,    & &\\
	  \quad &  \alpha_f(p^{\nu}n,0,p^{\mu}mN) = \sum^{\min(\nu,\mu)}_{\delta=0} p^{\delta(k-1)}     		 \alpha_f(p^{\nu+\mu-2\delta}n,0,mN),      & &\\
	  \quad &  \alpha_f(p^{\nu}n, 0,0) = \sigma_{k-1}(p^{\nu}) \alpha_f(n,0,0),  & &\\
	  \quad & \alpha_f(0,0,p^{\mu}mN) = \sigma_{k-1}(p^{\mu})\alpha_f(0,0,mN)	   & &\\
	  \quad &  \text{for all $(n,r,m)$ such that $p\nmid n,p\nmid r, p\nmid m$} .       & &\\
(iv) \quad & f \underset{k}{\mid} \Sigma_N^J \,\tfrac{1}{\sqrt{p}}\diag(p,p,1,1)\,
   \Sigma_N^J     & &\\
     	  \quad &   = f \underset{k}{\mid} \left(\Sigma_N^J \,
     \tfrac{1}{p}\diag(p,p^2,p,1)\,\Sigma_N^J + \sum_{u\bmod{p}} \Sigma_N^J\, I\times 
      \begin{pmatrix}
       1 & u/p  \\  0 & 1                                                    
      \end{pmatrix} \Sigma_N^J\right). & &
\end{flalign*}
\end{lemma}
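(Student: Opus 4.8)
The natural strategy is to prove the chain $(i)\Leftrightarrow(ii)$, $(ii)\Leftrightarrow(iii)$, and $(i)\Leftrightarrow(iv)$ separately, each time translating operator identities into Fourier coefficient identities.

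For $(i)\Leftrightarrow(ii)$, I would first work out explicit coset representatives for $\Gamma\backslash\Mcal(p)$, which are the classical ones: $\left(\begin{smallmatrix} p & 0 \\ 0 & 1\end{smallmatrix}\right)/\sqrt p$ together with $\left(\begin{smallmatrix} 1 & u \\ 0 & p\end{smallmatrix}\right)/\sqrt p$ for $u\bmod p$. Plugging these into the definitions \eqref{gl_5} of $\Tcal_p^{\uparrow}$ and $\Tcal_p^{\downarrow}$ and acting on the Fourier expansion, one computes how each representative transforms $e^{2\pi i\trace(TZ)}$ via the embedding \eqref{gl_4}. Collecting terms, the Fourier coefficient of $f\underset{k}{\mid}\Tcal_p^{\uparrow}$ at $T=\left(\begin{smallmatrix} n & r/2\\ r/2 & mN\end{smallmatrix}\right)$ comes out as $\alpha_f(pn,r,mN)+p^{k-1}\alpha_f(n/p,r/p,mN)$ (the action on the $\tau$-variable), while $f\underset{k}{\mid}\Tcal_p^{\downarrow}$ gives $\alpha_f(n,r,pmN)+p^{k-1}\alpha_f(n,r/p,m N/p)$ (the action on $\tau'$, using that the embedding inserts $N$ and $1/N$ in the right slots). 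Equating these and noting $f$ determines and is determined by its coefficients yields $(ii)$. Care must be taken with the normalization $1/\sqrt p$, which produces the factor $p^k$ from $\det(CZ+D)^{-k}$ and combines with the single/multiple coset counts to give the stated $p^{k-1}$.

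For $(ii)\Leftrightarrow(iii)$, this is a purely combinatorial/arithmetic induction. Writing $a_{\nu,\rho,\mu}:=\alpha_f(p^\nu n,p^\rho r,p^\mu m N)$ for fixed $(n,r,m)$ coprime to $p$, relation $(ii)$ becomes a three-term-type recursion $a_{\nu,\rho,\mu+1}+p^{k-1}a_{\nu,\rho-1,\mu-1}=a_{\nu+1,\rho,\mu}+p^{k-1}a_{\nu-1,\rho,\mu}$ with appropriate conventions that coefficients with negative subscripts vanish. One then checks that the closed formula in $(iii)$ satisfies this recursion with the correct initial data, and conversely that the recursion together with the boundary cases ($r=0$, or $n=m=0$) forces the closed formula; the degenerate lines in $(iii)$ are exactly the specializations needed to anchor the induction. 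I expect this to be the most tedious but least conceptually demanding step — essentially the same bookkeeping as in the classical Saito--Kurokawa / Eichler--Zagier setting \cite{EZ}, adapted to the two downward/upward directions.

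For $(i)\Leftrightarrow(iv)$, the point is to recognize the double cosets on the right-hand side of $(iv)$ as the "Jacobi-group-symmetrized" versions of $\Tcal_p^{\uparrow}$ and $\Tcal_p^{\downarrow}$. Concretely, $\tfrac{1}{\sqrt p}\diag(p,p,1,1)$ is $\left(\begin{smallmatrix} p&0\\0&1\end{smallmatrix}\right)/\sqrt p\times I$ under \eqref{gl_4} up to the Jacobi group, so $\Sigma_N^J\,\tfrac{1}{\sqrt p}\diag(p,p,1,1)\,\Sigma_N^J$ collapses, on the Jacobi form $f^*_{mN}$, to (a multiple of) $\Tcal_p^{\uparrow}$; similarly $\tfrac1p\diag(p,p^2,p,1)$ and the $I\times\left(\begin{smallmatrix}1&u/p\\0&1\end{smallmatrix}\right)$ terms reassemble into $\Tcal_p^{\downarrow}$ acting on the other factor. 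The required computation is to decompose each double coset $\Sigma_N^J M\Sigma_N^J$ into left cosets $\Sigma_N^J L$ and match the orbit of $L$'s with the $\Gamma\backslash\Mcal(p)$ representatives used in step one; the identity element of the Jacobi group and the $\pm1$ ambiguity need to be tracked. The hard part here — and the main obstacle of the whole lemma — is getting the coset decompositions of these paramodular double cosets exactly right, including multiplicities, since a miscounted coset shifts a power of $p$ and breaks the equivalence. Once the decompositions are pinned down, $(iv)$ is literally $(i)$ rewritten, and the lemma follows.
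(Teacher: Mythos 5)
Your treatment of (i)$\Leftrightarrow$(ii) and (ii)$\Leftrightarrow$(iii) is essentially the paper's own argument: the representatives $\tfrac{1}{\sqrt p}\left(\begin{smallmatrix}p&0\\0&1\end{smallmatrix}\right)$, $\tfrac{1}{\sqrt p}\left(\begin{smallmatrix}1&b\\0&p\end{smallmatrix}\right)$ of $\Gamma\backslash\Mcal(p)$, comparison of Fourier expansions, and an induction anchored by the degenerate cases. (One slip in your restated recursion: the last term must be $p^{k-1}a_{\nu-1,\rho-1,\mu}$, not $p^{k-1}a_{\nu-1,\rho,\mu}$; in (ii) the shift hits $n$, $r$ \emph{and} the $\rho$-index simultaneously.)

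The genuine gap is in (i)$\Leftrightarrow$(iv). The identification you build this step on is false: under \eqref{gl_4} the element $\tfrac{1}{\sqrt p}\left(\begin{smallmatrix}p&0\\0&1\end{smallmatrix}\right)\times I$ equals $\diag(\sqrt p,1,1/\sqrt p,1)$, and this does not lie in $\Sigma_N^J\,\tfrac{1}{\sqrt p}\diag(p,p,1,1)\,\Sigma_N^J$, since every element of that double coset is $\sqrt p$ times a rational matrix while $\diag(\sqrt p,1,1/\sqrt p,1)$ is not. So the left side of (iv) does not collapse to $\Tcal_p^{\uparrow}$, nor does the right side reassemble into $\Tcal_p^{\downarrow}$; note also that $\tfrac1p\diag(p,p^2,p,1)=I\times\tfrac1p\left(\begin{smallmatrix}p^2&0\\0&1\end{smallmatrix}\right)$ is not even built from $\Mcal(p)$. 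What is true, and what the paper uses, is that every representative occurring in (iv) is a $\Tcal_p^{\uparrow}$- resp.\ $\Tcal_p^{\downarrow}$-representative multiplied on the right by the single element $R:=I\times\tfrac{1}{\sqrt p}\left(\begin{smallmatrix}p&0\\0&1\end{smallmatrix}\right)$: one has $\tfrac{1}{\sqrt p}\diag(p,p,1,1)=\bigl(\tfrac{1}{\sqrt p}\left(\begin{smallmatrix}p&0\\0&1\end{smallmatrix}\right)\times I\bigr)R$, the double coset on the left of (iv) decomposes into exactly this coset together with the $p$ cosets of $\tfrac{1}{\sqrt p}\left(\begin{smallmatrix}1&u\\0&p\end{smallmatrix}\right)\times\tfrac{1}{\sqrt p}\left(\begin{smallmatrix}p&0\\0&1\end{smallmatrix}\right)=\bigl(\tfrac{1}{\sqrt p}\left(\begin{smallmatrix}1&u\\0&p\end{smallmatrix}\right)\times I\bigr)R$, and the double cosets on the right of (iv) are single right cosets equal to the $\Tcal_p^{\downarrow}$-representatives times $R$, namely $R^2=\tfrac1p\diag(p,p^2,p,1)$ and $I\times\left(\begin{smallmatrix}1&u/p\\0&1\end{smallmatrix}\right)$. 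Hence (iv) is precisely identity (i) evaluated at $Z[\diag(1,\sqrt p)]$, which is equivalent to (i) because slashing with the fixed invertible $R$ is injective. This substitution $Z\mapsto Z[\diag(1,\sqrt p)]$ is the missing idea; without it the matching of cosets with $\Gamma\backslash\Mcal(p)$ that you propose cannot be carried out as stated, and the powers of $p$ you rightly worry about would come out wrong.
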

\noindent
Here and in the following we set $\alpha_f(n,r,mN) = 0$ if $n$ or $r$ or $m$ is not integral.

\begin{proof}
One can apply the isomorphism between $\Sigma_N$ and $\Ocal(2,3)$ (e.g. \cite{K1}) and may the use \cite{HM1} and \cite{GHK}. Alternatively we give a short direct proof. \\
(i)\;$\Leftrightarrow$\;(ii) Use the representatives 
$\frac{1}{\sqrt{p}}\left(\begin{smallmatrix}
       p & 0  \\  0 & 1                                                    
      \end{smallmatrix}\right)$, 
$\frac{1}{\sqrt{p}}\left(\begin{smallmatrix}
       1 & b  \\  0 & p                                                    
      \end{smallmatrix}\right)$, 
$b=0,\ldots,p-1$, of $\Mcal(p)$ and calculate the Fourier expansion in (i): 
\begin{align*}
 & f \underset{k}{\mid} \Tcal_p^{\uparrow}(Z) = p^{k/2} f\begin{pmatrix}
                                                          p\tau & \sqrt{p}z  \\  \sqrt{p}z & \tau'
                                                         \end{pmatrix} 
+ p^{-k/2} \sum_{u\bmod{p}} f\begin{pmatrix}
                              (\tau+u)/p & z/\sqrt{p}  \\  z/\sqrt{p} & \tau'
                             \end{pmatrix}   \\
 & = p^{-1+k/2} \sum_{n,r,m} \left(\alpha_f\left(pn,r,mN\right) + p^{k-1} \alpha_f\left(\tfrac{n}{p},\tfrac{r}{p},mN\right)\right) \,e^{2\pi i(n\tau+rz/\sqrt{p}+mN\tau')} ,  \\[1ex]
 & f \underset{k}{\mid} \Tcal_p^{\downarrow}(Z) = p^{k/2} f\begin{pmatrix}
                                                          \tau & \sqrt{p}z  \\  \sqrt{p}z & p\tau'
                                                         \end{pmatrix} 
+ p^{-k/2} \sum_{u\bmod{p}} f\begin{pmatrix}
                             \tau & z/\sqrt{p}  \\  z/\sqrt{p} & (\tau'+u/N)/p
                             \end{pmatrix}   \\
 & = p^{-1+k/2} \sum_{m,n,r} \left(\alpha_f\left(n,r,pmN\right) + p^{k-1} \alpha_f\left(n,\tfrac{r}{p},\tfrac{m}{p}N\right)\right) \,e^{2\pi i(n\tau+rz/\sqrt{p}+mN\tau')} .
\end{align*}

\noindent(ii)\;$\Rightarrow$\;(iii) The identity can be proved by a simple induction on $\mu$ using the identity (ii), where the case $\mu=0$ is clear. If $\mu\geq 1$ assume both $\nu\geq 1$ and $\rho\geq 1$. Then (ii) yields
\begin{align*}
 \alpha_f\bigl(p^{\nu}n,p^{\rho}r,p^{\mu}mN\bigr)= & \;\alpha_f \bigl(p^{\nu+1}n,p^{\rho}r,p^{\mu-1}mN\bigr)+p^{k-1} \alpha_f\bigl(p^{\nu-1}n,p^{\rho-1}r,p^{\mu-1}mN\bigr)  \\
 & -p^{k-1} \alpha_f\bigl(p^{\nu}n,p^{\rho-1}r,p^{\mu-2}mN\bigr).
\end{align*}
By induction hypothesis this equals
\begin{align*}
 & \sum^{\min(\nu+1,\rho,\mu-1)}_{\delta=0} p^{\delta(k-1)} \alpha_f\bigl(p^{\nu+\mu-2\rho}n,p^{\rho-1}r,mN\bigr)     + \sum^{\min(\nu,\rho,\mu)}_{\delta=1} p^{\delta(k-1)} \alpha_f\bigl(p^{\nu+\mu-2\delta}n,p^{\rho-\delta}r,mN\bigr) \\
 & \quad -\sum^{\min(\nu+1,\rho,\mu-1)}_{\delta=1} p^{\delta(k-1)} \alpha_f\bigl(p^{\nu+\mu-2\delta}n,p^{\rho-\delta}r,mN\bigr) \\
 & = \sum^{\min(\nu,\mu,\rho)}_{\delta=0} p^{\delta(k-1)} \alpha_f\bigl(p^{\nu+\mu-2\delta}n,p^{\rho-\delta}r,mN\bigr).
\end{align*}
The remaining cases are dealt with in the same way. \\
(iii)\;$\Rightarrow$\;(i) Inserting (iii) we get 
\[
\alpha_f(p^{\nu}n,p^{\rho}r,p^{\mu}mN) - p^{k-1} \alpha_f(p^{\nu-1}n,p^{\rho-1}r,p^{\mu-1}mN) = \alpha_f(p^{\nu+\mu}n,p^{\rho}r,mN),
\]
which yields the claim.  \\
(i)\;$\Leftrightarrow$\;(iv) Equation (iv) is equal to (i) applied to 
\[
 Z\begin{bmatrix}
   1 & 0  \\  0 & \sqrt{p} 
  \end{bmatrix}
= \begin{pmatrix}
   \tau & z\sqrt{p}  \\  z\sqrt{p} & p\tau'
  \end{pmatrix}
\]
instead of $Z$ because of 
\begin{align*}
 & \Sigma_N^J\,\tfrac{1}{\sqrt{p}}\diag(p,p,1,1)\Sigma_N^J  \\
 & = \Sigma_N^J \,\tfrac{1}{\sqrt{p}}\diag(p,p,1,1)\,
      \dot\cup \underset{u\bmod{p}}{\dot\bigcup}  
   \Sigma_N^J \,\frac{1}{\sqrt{p}}\left(\begin{pmatrix}
       1 & u  \\  0 & p                                                    
      \end{pmatrix} 
      \times\begin{pmatrix}
       p & 0  \\  0 & 1                                                    
      \end{pmatrix}\right).   
 \end{align*}
Since the remaining double cosets consist of a single right coset, the claim follows.
\end{proof}

As all the elliptic Hecke operators $\Tcal_n$ are polynomials in $\Tcal_p$, $p\in\Pb$, we get
\begin{corollary} 
Given $f\in\Mcal_k(\Sigma_N)$, $N\in\N$ the following assertions are equivalent:
\begin{enumerate}[(i)]
 \item $f\in\Mcal^*_k(\Sigma_N)$.
 \item $f \underset{k}{\mid} \Tcal_n^{\uparrow} = f \underset{k}{\mid} \Tcal_n^{\downarrow}$ for all $n\in\N$.
 \item $f \underset{k}{\mid} \Tcal_p^{\uparrow} = f \underset{k}{\mid} \Tcal_p^{\downarrow}$ for all $p\in\Pb$.
\end{enumerate}
\end{corollary}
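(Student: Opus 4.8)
The plan is to run the cycle (i) $\Rightarrow$ (ii) $\Rightarrow$ (iii) $\Rightarrow$ (i), the implication (ii) $\Rightarrow$ (iii) being the trivial specialization $n=p$.

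For (i) $\Rightarrow$ (ii) I would first reduce to primes. Under \eqref{gl_4} the matrices $M\times I$ and $I\times M'$ are the identity on complementary pairs of coordinates, hence commute, so $\Tcal_m^{\uparrow}$ and $\Tcal_n^{\downarrow}$ commute as operators; moreover, as recalled just before the statement, each $\Tcal_n$ is a fixed universal polynomial in the $\Tcal_p$, and this same polynomial expresses $\Tcal_n^{\uparrow}$ through the $\Tcal_p^{\uparrow}$ and $\Tcal_n^{\downarrow}$ through the $\Tcal_p^{\downarrow}$ (in either case the computation takes place inside one $SL_2$-factor). An induction on the number of prime factors of $n$, shuffling one operator past the others by commutativity, then shows that $f\underset{k}{\mid}\Tcal_p^{\uparrow}=f\underset{k}{\mid}\Tcal_p^{\downarrow}$ for all $p$ already forces $f\underset{k}{\mid}\Tcal_n^{\uparrow}=f\underset{k}{\mid}\Tcal_n^{\downarrow}$ for all $n$. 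So it suffices to treat primes, and by Lemma~\ref{lemma 1} it is enough to verify the four identities of part (iii) of that Lemma for a fixed $p$. Each of them follows by inserting \eqref{gl_2} into both sides: since $p\nmid n,r,m$, the greatest common divisors that occur split as a power of $p$ times a prime-to-$p$ part, and both sides of the first identity collapse to the same double sum $\sum_{i=0}^{\min(\nu,\rho,\mu)}p^{i(k-1)}\sum_{d\mid\gcd(n,r,m)}d^{k-1}\,\alpha_f(p^{\nu+\mu-2i}nm/d^2,\,p^{\rho-i}r/d,\,N)$; the remaining three (notably the $\sigma_{k-1}$-identities, via multiplicativity of $\sigma_{k-1}$) are analogous and easier.

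For (iii) $\Rightarrow$ (i) I would use Gritsenko's isomorphism $\varphi$. Given $f$ satisfying (iii), its Fourier--Jacobi coefficient $f_N$ lies in $J_{k,N}$, so $g:=\varphi^{-1}(f_N)\in\Mcal^*_k(\Sigma_N)$ has $g_N=f_N$; by the implication (i) $\Rightarrow$ (ii) just proved, $g$ satisfies (iii) as well, hence so does $h:=f-g\in\Mcal_k(\Sigma_N)$, and in addition $\alpha_h(n,r,N)=0$ for all $n,r$. By Lemma~\ref{lemma 1}, $h$ obeys identity (ii) of that Lemma for every prime $p$; an induction on $m$ --- pick a prime $p\mid m$, write $m=pm_1$, and substitute the vanishing of the lower Fourier--Jacobi coefficients into that identity --- yields $\alpha_h(n,r,mN)=0$ for all $m\geq 1$. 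Thus the expansion \eqref{gl_1} of $h$ degenerates to $h=h^*_0$, a holomorphic function of $\tau$ alone. Applying $h\underset{k}{\mid}(I\times\gamma)=h$ with $\gamma=\left(\begin{smallmatrix}0&-1\\1&0\end{smallmatrix}\right)$ (the image $I\times\gamma$ under \eqref{gl_4} lies in $\Sigma_N$), for which the $\tau$-entry of $(I\times\gamma)\langle Z\rangle$ equals $\tau-z^2/\tau'$ and $\det(CZ+D)=N\tau'$, and then setting $z=0$, gives $(N\tau')^{-k}h(\tau)=h(\tau)$ identically in $\tau,\tau'$; hence $h\equiv 0$ and $f=g\in\Mcal^*_k(\Sigma_N)$.

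The step I expect to be the real obstacle is this last one. The Hecke identities of Lemma~\ref{lemma 1} relate the $\alpha_f(n,r,mN)$ only among themselves and never link the Fourier coefficients of a rank-one matrix $\left(\begin{smallmatrix}n&0\\0&0\end{smallmatrix}\right)$ with those of $\left(\begin{smallmatrix}0&0\\0&mN\end{smallmatrix}\right)$; on its own, (iii) therefore yields \eqref{gl_2} only for $m\geq 1$ and pins down the degenerate coefficients $\alpha_f(n,0,0)$ merely up to the single scalar $\alpha_f(0,0,N)$. It is the surjectivity of $\varphi$ (equivalently, the normalization \eqref{gl_3}), together with the rigidity of $\Mcal_k(\Sigma_N)$ used to force $h\equiv 0$, that closes this gap. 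The remaining ingredients --- the commuting-operator bookkeeping and the divisor-sum manipulations --- are routine.
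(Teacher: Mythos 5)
Your argument is correct, but for the crucial implication (iii) $\Rightarrow$ (i) you take a genuinely different route from the paper. The paper disposes of (ii) $\Leftrightarrow$ (iii) exactly as you do (via the polynomial relations among the elliptic $\Tcal_n$), but for (iii) $\Rightarrow$ (i) it simply derives the Maaß relation \eqref{gl_2} by iterating part (iii) of Lemma~\ref{lemma 1} over all primes dividing $m$ — a purely combinatorial manipulation of Fourier coefficients, stated in one line. You instead invert Gritsenko's map $\varphi$, subtract $g=\varphi^{-1}(f_N)$, use Lemma~\ref{lemma 1}(ii) and induction on $m$ to kill all Fourier--Jacobi coefficients of $h=f-g$ with $m\geq 1$, and then eliminate the remaining function of $\tau$ alone by one paramodular substitution; this is in essence the strategy the paper reserves for Theorem~\ref{theorem 1} (the Maaß lift $f^M$ of $f_N$ and the vanishing argument), specialized to $S=\emptyset$, where the appeal to \cite{BBK} collapses to your elementary $(N\tau')^{-k}$ observation. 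What your detour buys is precisely the point you flag at the end: the identities of Lemma~\ref{lemma 1}(iii) relate $\alpha_f(p^{\nu}n,0,0)$ only to $\alpha_f(n,0,0)$ and $\alpha_f(0,0,p^{\mu}mN)$ only to $\alpha_f(0,0,mN)$, so they never produce the coupling $\alpha_f(1,0,0)=\alpha_f(0,0,N)$ that \eqref{gl_2} demands for the rank-one matrices; the paper's one-liner silently uses modularity here (the two corner series are elliptic modular forms with $\sigma_{k-1}$-proportional coefficients, hence multiples of $E_k$ with the same constant term, which is the content of \eqref{gl_3}), whereas your lift-and-subtract argument handles it explicitly at the cost of invoking the surjectivity of $\varphi$. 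Two minor caveats: your final rigidity step is vacuous for $k=0$ (there $h$ is a constant, which lies in $\Mcal^*_0(\Sigma_N)$ anyway, so the conclusion survives), and your reduction of (i) $\Rightarrow$ (ii) to primes, while correct, reproves the forward half of what the paper gets directly from Lemma~\ref{lemma 1}; neither affects the validity of the proof.
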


\begin{proof}
We get \eqref{gl_2} from the application of part (iii) in Lemma 1 to all primes $p$ dividing $m$.
\end{proof}

The main result of this section is

\begin{theorem}\label{theorem 1} 
A modular form $f\in \Mcal_k(\Sigma_N)$, $N\in\N$ belongs to the Maaß space $\Mcal^*_k(\Sigma_N)$ if and only if there exists a finite set $S\subset \Pb$ such that
\[
 f \underset{k}{\mid} \Tcal_p^{\uparrow} = f \underset{k}{\mid} \Tcal_p^{\downarrow} \;\; \text{for all}\;\; p\in\Pb,\;p\notin S.
\]
\end{theorem}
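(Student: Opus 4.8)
The plan is to prove the two implications separately. The implication ``$\Leftarrow$'' is immediate: if $f\in\Mcal^*_k(\Sigma_N)$ then by the Corollary $f\underset{k}{\mid}\Tcal_p^{\uparrow}=f\underset{k}{\mid}\Tcal_p^{\downarrow}$ holds for \emph{every} prime $p$, so one may take $S=\emptyset$. All the substance is in ``$\Rightarrow$''. Let me call $f$ \emph{$p$-good} if it satisfies the equivalent conditions of Lemma~\ref{lemma 1} at $p$; we are given that $f$ is $p$-good for every $p$ outside a finite set $S$, and we must conclude $f\in\Mcal^*_k(\Sigma_N)$.

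First I would reduce to the case $f_N=0$. By Gritsenko's isomorphism $\varphi\colon\Mcal^*_k(\Sigma_N)\xrightarrow{\ \sim\ }J_{k,N}$ there is $g\in\Mcal^*_k(\Sigma_N)$ with $g_N=f_N$; put $h:=f-g$. Then $h_N=0$, and since $g$ is $p$-good for \emph{all} $p$ by the Corollary, $h$ is $p$-good for all $p\notin S$, so it suffices to prove $h=0$. Next I would feed $h_N=0$ into the identity of Lemma~\ref{lemma 1}(ii) and show, by induction on the number of prime factors of $m$, that $\alpha_h(n,r,mN)=0$ for all $n,r$ whenever every prime divisor of $m$ lies outside $S$: the base case $m=1$ is $h_N=0$, and for the inductive step one picks a prime $p\mid m$ (necessarily $p\notin S$), writes $m=pm'$, and notes that in identity (ii) at $p$ with parameters $(n,r,m')$ every term other than $\alpha_h(n,r,mN)$ is attached to the index $m'N$ or $(m'/p)N$ and hence vanishes by the inductive hypothesis. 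In particular $h_{mN}=0$ for every $m$ coprime to $\prod_{q\in S}q$.

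The decisive, and hardest, step is to show that the remaining Fourier--Jacobi coefficients $h_{mN}$ --- those at indices $m$ built only from primes of $S$ --- also vanish, which will force $h=0$ and hence $f=g\in\Mcal^*_k(\Sigma_N)$. This is exactly where ``almost all relations'' must be upgraded to ``all relations'', and it cannot be done by the identities of Lemma~\ref{lemma 1} alone: an identity at a prime $p\notin S$ only rewrites the $p$-part of an index and never affects its $S$-part, so relation-chasing leaves the Jacobi forms $h_{q^aN}$ ($q\in S$, $a\ge1$) unconstrained. To close this gap I would invoke the finite-dimensionality of $\Mcal_k(\Sigma_N)$ together with a Sturm-type bound: a paramodular form of weight $k$ and level $N$ is determined by the finitely many Fourier--Jacobi coefficients $h_0,h_N,\dots,h_{BN}$ for a suitable $B=B(k,N)$, and since $h$ is $p$-good for all but finitely many primes one combines the vanishing already proved with the linear relations among $h_0,\dots,h_{BN}$ coming from modularity to conclude that all of them vanish. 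An alternative I would attempt is to decompose $h$ along the full Hecke algebra of \cite{GK}, verify that the space of forms which are $p$-good outside a finite set is preserved by the Hecke operators at the remaining primes, reduce to a common eigenform, and use that $p$-goodness at almost every $p$ pins down the relevant local spinor parameters and thus forces $h$ to be a Gritsenko lift; by injectivity of $\varphi$ and $h_N=0$ this again gives $h=0$. I expect this final rigidity step to be the main obstacle; everything preceding it is bookkeeping with Lemma~\ref{lemma 1} and the Maa\ss\ lift.
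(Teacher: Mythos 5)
Your reduction is exactly the paper's: the easy direction is Corollary 1 with $S=\emptyset$, and for the converse the paper also subtracts the Maa\ss\ lift of $f_N$ (there called $f^M$, your $g$) and uses Lemma \ref{lemma 1} together with \eqref{gl_2} to conclude that the difference $h=f-f^M$ has $h_{mN}=0$ for every index $m$ coprime to the primes of $S$. Your induction on the number of prime factors of $m$ via Lemma \ref{lemma 1}(ii) is a correct way to carry out that bookkeeping. So up to the last step you and the paper are on the same track, and you have correctly located where the real content lies.

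But that last step is a genuine gap in your proposal, and it is precisely the point the paper settles by an external result: the direct analog of Lemma 3 in \cite{BBK}, which states that a form all of whose Fourier--Jacobi coefficients $h_{mN}$ with $\gcd(m,M)=1$ vanish must vanish identically. You do not prove any such statement. Your Sturm-type substitute does not work as described: finite dimensionality determines $h$ by its coefficients $h_0,\dots,h_{BN}$, but your vanishing statement misses \emph{all} indices divisible by a prime of $S$, hence infinitely many indices below any bound $B$ (e.g. $m=q,2q,\dots$ for $q\in S$), and ``linear relations among $h_0,\dots,h_{BN}$ coming from modularity'' is not an argument --- those coefficients are genuinely independent data unless one invokes exactly the kind of coprime-index rigidity lemma that is missing. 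Your alternative route through Hecke eigenforms and local parameters rests on further unproven claims (stability of the ``$p$-good outside $S$'' space under the Hecke operators at primes of $S$, a decomposition of the full space including the non-cuspidal part into eigenforms, and the deep fact that almost-everywhere local data forces a Gritsenko lift), so it is a programme rather than a proof, and much heavier machinery than the elementary coprime-index vanishing lemma the paper actually uses. To complete your argument you must either prove the paramodular analog of Lemma 3 in \cite{BBK} (e.g.\ by showing that every relevant $T>0$ is equivalent under the invariance group to one whose lower right entry $mN$ has $m$ coprime to $M$) or cite it; without it the conclusion $h=0$ does not follow.
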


\begin{proof}
 ``\;$\Rightarrow$\;'' Use Corollary 1 with $S=\emptyset$.  \\
 ``\;$\Leftarrow$\;'' Let $M= N \prod_{q \in S} q$. Then Lemma 1 holds for all $p\in\Pb$, $p\nmid M$. If $f^M$ denotes the Maaß lift of $f_N$ we consider $g:=f-f^M$. It follows from Lemma 1 and \eqref{gl_2} that $g$ has a Fourier-Jacobi expansion of the form
 \[
  \sum^{\infty}_{m=0} g_{mN}(\tau,z)\, e^{2\pi imN\tau'}, 
 \]
where at least one prime divisor of $m$ belongs to $S$. Hence all the Fourier-Jacobi coefficients $g_{mN}$ vanish, whenever $m$ is coprime to $M$. Thus the direct analog of Lemma 3 in \cite{BBK} yields $g\equiv 0$, hence $f=f^M \in\Mcal^*_k(\Sigma_N)$.
\end{proof}

\section{The maximal normal extension $\mathbf{\widehat\Sigma_N}$} \label{section 2}

If $N$ is squarefree and $\nu$ denotes the number of prime divisors of $N$ we obtain a maximal normal extension of index $2^{\nu}$ of $\Sigma_N$ given by 
\[
\widehat\Sigma_N = \langle \Sigma_N,F_d;d\mid N\rangle \subset \Sp_2(\R),
\]
where 
\[
 F_d = \begin{pmatrix}
        V^{-1}_d & 0  \\  0 & V^{tr}_d
       \end{pmatrix}, \quad 
 V_d = \frac{1}{\sqrt{d}}\begin{pmatrix}
        \alpha d & \beta N  \\  \gamma & \delta d
       \end{pmatrix}, \quad       
\alpha,\beta,\gamma,\delta\in\Z,\;\; \alpha\delta d-\beta\gamma N/d = 1.
\]
Note that $F^2_d\in \Sigma_N$. Instead of $V_d$ we may choose an arbitrary element of
\[
 \Gamma^{\circ}(N) V_d = V_d\Gamma^{\circ}(N) = \left\{\frac{1}{\sqrt{d}} \begin{pmatrix}
                                                                           *d & *N  \\  * & *d
                                                                          \end{pmatrix}\in SL_2(\R);\;*\in\Z\right\},
\]                                                                         
where
\[
\Gamma^{\circ}(N) = \left\{\begin{pmatrix}
                          * & *N  \\  * & *
                         \end{pmatrix}\in SL_2(\Z);\;*\in \Z\right\}.
\]
From Gritsenko \cite{G2} we obtain

\begin{lemma}\label{lemma 2} 
Let $N\in \N$ be squarefree and $d\mid N$. Then the mapping
\[
 \Mcal^*_k(\Sigma_N) \to \Mcal^*_k(\Sigma_N), \quad f\mapsto f \underset{k}{\mid} F^{-1}_d (Z) = F(Z[V^{tr}_d ]),
\]
is an isomorphism.
\end{lemma}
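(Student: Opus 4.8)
First I would check that $f\mapsto f\underset{k}{\mid}F_d^{-1}$ is a well-defined linear involution of the whole space $\Mcal_k(\Sigma_N)$. Since $\Sigma_N$ is normal in $\widehat\Sigma_N$ and $F_d\in\widehat\Sigma_N$, for $M\in\Sigma_N$ one has $(f\underset{k}{\mid}F_d^{-1})\underset{k}{\mid}M=f\underset{k}{\mid}(F_d^{-1}MF_d)F_d^{-1}=f\underset{k}{\mid}F_d^{-1}$, so $f\underset{k}{\mid}F_d^{-1}\in\Mcal_k(\Sigma_N)$; and $F_d^2\in\Sigma_N$ gives $(f\underset{k}{\mid}F_d^{-1})\underset{k}{\mid}F_d^{-1}=f\underset{k}{\mid}F_d^{-2}=f$. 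As the lower right block of $F_d^{-1}=\left(\begin{smallmatrix}V_d&0\\0&(V_d^{-1})^{tr}\end{smallmatrix}\right)$ has determinant $1$, we get $f\underset{k}{\mid}F_d^{-1}(Z)=f(Z[V_d^{tr}])$, so writing $g:=f\underset{k}{\mid}F_d^{-1}$ and comparing Fourier expansions gives $\alpha_g(T)=\alpha_f(T[V_d^{-1}])$ for every admissible index $T=\left(\begin{smallmatrix}n&r/2\\r/2&mN\end{smallmatrix}\right)$. A direct computation of $T[V_d^{\pm1}]$ (using $d\mid N$ and $N$ squarefree) shows that $T\mapsto T[V_d^{-1}]$ is a bijection of the set of admissible indices onto itself, preserves the discriminant $4nmN-r^2$ (because $\det V_d=1$), and preserves the content $\gcd(n,r,m)$ ($\gcd(n,r,m)$ divides each entry of $T[V_d^{-1}]$, and symmetrically). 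Granting that $g\in\Mcal_k^*(\Sigma_N)$ whenever $f\in\Mcal_k^*(\Sigma_N)$, the restriction is again an involution, hence an isomorphism of $\Mcal_k^*(\Sigma_N)$.

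So let $f\in\Mcal_k^*(\Sigma_N)$; I would verify the Maaß relation \eqref{gl_2} for $g$ directly. Write $\mathrm{red}_\delta(n,r,mN):=(nm/\delta^2,r/\delta,N)$ for $\delta\mid\gcd(n,r,m)$, so that \eqref{gl_2} reads $\alpha_f(T)=\sum_{\delta\mid\gcd(n,r,m)}\delta^{k-1}\alpha_f(\mathrm{red}_\delta T)$; for a \emph{content-$1$} index $S$ it collapses to $\alpha_f(S)=\alpha_f(\mathrm{red}_1 S)$, a Fourier coefficient of the index-$N$ Jacobi form $f_N=\varphi(f)$, and since the Fourier coefficients $c(n',r')$ of an index-$N$ Jacobi form depend only on $4Nn'-r'^2$ and on $r'\bmod 2N$ \cite{EZ}, the value $\alpha_f(S)$ depends only on the discriminant of $S$ and on the middle coefficient of $S$ modulo $2N$. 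Now fix $T=\left(\begin{smallmatrix}n&r/2\\r/2&mN\end{smallmatrix}\right)\neq0$ and put $\widehat T:=T[V_d^{-1}]$; applying \eqref{gl_2} to $f$ at $\widehat T$ and using $\gcd(\widehat n,\widehat r,\widehat m)=\gcd(n,r,m)=:t$ gives $\alpha_g(T)=\alpha_f(\widehat T)=\sum_{\delta\mid t}\delta^{k-1}\alpha_f(\mathrm{red}_\delta\widehat T)$, whereas the right-hand side of \eqref{gl_2} for $g$ is $\sum_{\delta\mid t}\delta^{k-1}\alpha_g(\mathrm{red}_\delta T)=\sum_{\delta\mid t}\delta^{k-1}\alpha_f((\mathrm{red}_\delta T)[V_d^{-1}])$. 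For each $\delta\mid t$ the indices $\mathrm{red}_\delta\widehat T$ and $(\mathrm{red}_\delta T)[V_d^{-1}]$ are both of content $1$ with the same discriminant $(4nmN-r^2)/\delta^2$, and the explicit formula for the middle coefficient of an index under $[V_d^{-1}]$ shows that their middle coefficients differ by a multiple of $2N$ — the difference factors as $2N(m/\delta-1)\cdot(\text{integer})$, using the relation $\mathfrak a\mathfrak d d-\mathfrak b\mathfrak c N/d=1$ for the entries $\mathfrak a,\mathfrak b,\mathfrak c,\mathfrak d$ of $V_d$. Hence $\alpha_f$ agrees on the two, the two sums match term by term, and $g$ satisfies \eqref{gl_2}.

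The only genuine work here is the matrix bookkeeping — computing $T[V_d^{\pm1}]$ explicitly in the first paragraph and extracting the middle-coefficient difference in the second — which I expect to be the main, though entirely routine, step; everything else is formal. One could instead quote Gritsenko \cite{G2}, who shows that the arithmetic lift $\varphi^{-1}\colon J_{k,N}\to\Mcal_k^*(\Sigma_N)$ intertwines $f\mapsto f\underset{k}{\mid}F_d^{-1}$ with the Atkin--Lehner operator $W_d$ on $J_{k,N}$, which preserves $J_{k,N}$.
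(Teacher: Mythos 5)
Your proposal is correct and follows essentially the same route as the paper: compute the action on Fourier coefficients via the explicit $SL_3(\Z)$ transformation \eqref{gl_6} of the index $(n,r,m)$, observe that it preserves $\gcd(n,r,m)$ and the discriminant, and then verify \eqref{gl_2} for $g=f\underset{k}{\mid}F_d^{-1}$. Your second paragraph (matching the two sums term by term using that Fourier coefficients of an index-$N$ Jacobi form depend only on the discriminant and on $r\bmod 2N$, with the difference of middle coefficients being $2N(m/\delta-1)(n/\delta-\alpha\gamma)$) supplies exactly the verification that the paper compresses into the phrase ``the claim follows from \eqref{gl_2}''.
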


\begin{proof}
The Fourier coefficient of $T$ in $f(Z[V^{tr}_d])$ is equal to the Fourier coefficient of $T[V^{-1}_d]$ of $f$, hence by $\alpha_f(n',r',m'N)$ given by
\begin{gather*}\tag{6}\label{gl_6}
\begin{pmatrix}
 n' \\ r' \\ m'
\end{pmatrix}
= \begin{pmatrix}
 d & -\gamma & \gamma^2N/d \\ -2N & \alpha d+\gamma N/d & -2\alpha\gamma N \\ N/d & -\alpha & \alpha^2 d
\end{pmatrix}
\begin{pmatrix}
 n \\ r \\ m
\end{pmatrix}.
\end{gather*}
As the latter matrix belongs to $SL_3(\Z)$ we get 
\[
 gcd(n',r',m') = gcd(n,r,m).
\]
Then the claim follows from \eqref{gl_2}.
\end{proof}

Next we have a closer look at eigenforms under $F_d$.

\begin{theorem} 
Let $f\in\Mcal^*_k(\Sigma_N)$, $d\mid N$, $N \in \N$ squarefree, $\varepsilon = \pm 1$. Then the following assertions are equivalent
\begin{enumerate}[(i)]
 \item $f(Z[V^{tr}_d]) = \varepsilon\,f(Z)$.
 \item $\alpha_f(T[V^{-1}_d]) = \varepsilon\,\alpha_f(T)$ for all $T$.
 \item $\alpha_f(n,r,N) = \varepsilon\, \alpha_f(n',r',N)$, whenever 
  \[
 4nN-r^2 = 4n' N-r'^2, \;\;r\equiv -r'\bmod{2d}, \;\;r\equiv r'\bmod{2N/d}.
 \]
\end{enumerate}
\end{theorem}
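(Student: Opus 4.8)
The plan is to establish the cycle (i) $\Rightarrow$ (ii) $\Rightarrow$ (iii) $\Rightarrow$ (i), exploiting the fact that the Maaß condition \eqref{gl_2} forces all Fourier coefficients to be determined by those of index-$N$ Jacobi form type, i.e.\ by the $\alpha_f(n,r,N)$ with a handful of discriminant-and-congruence constraints. The equivalence (i) $\Leftrightarrow$ (ii) is immediate from the transformation behaviour already recorded in the proof of Lemma \ref{lemma 2}: since $f(Z[V_d^{tr}])$ has $T$-th Fourier coefficient equal to $\alpha_f(T[V_d^{-1}])$, the identity $f(Z[V_d^{tr}]) = \varepsilon f(Z)$ holds precisely when $\alpha_f(T[V_d^{-1}]) = \varepsilon\,\alpha_f(T)$ for every $T\geq 0$, by uniqueness of Fourier expansions. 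The implication (ii) $\Rightarrow$ (iii) is then just specialization: take $T = \left(\begin{smallmatrix} n & r/2 \\ r/2 & N\end{smallmatrix}\right)$ (so $m=1$), compute $T[V_d^{-1}]$ using \eqref{gl_6}, and read off that its entries $(n',r',m')$ satisfy $m'=1$ only in the relevant range; the arithmetic relations $4nN - r^2 = \det$ is a similarity invariant, while the congruences $r\equiv -r' \bmod 2d$ and $r \equiv r' \bmod 2N/d$ drop out of reducing the middle row of \eqref{gl_6} modulo $2d$ and modulo $2N/d$ respectively (using $\alpha\delta d - \beta\gamma N/d = 1$ and $d\mid N$ squarefree).

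The real content is (iii) $\Rightarrow$ (ii), and this is where I expect the main obstacle. Here one must upgrade the hypothesis, stated only for the "primitive slice" $m=1$ and for the specific congruence data attached to $F_d$, to the full statement $\alpha_f(T[V_d^{-1}]) = \varepsilon\,\alpha_f(T)$ for all $T$. The mechanism is the Maaß relation \eqref{gl_2}: writing $e = \gcd(n,r,m)$, one has $\alpha_f(T) = \sum_{\delta\mid e}\delta^{k-1}\alpha_f(nm/\delta^2, r/\delta, N)$, and since \eqref{gl_6} lies in $SL_3(\Z)$ it preserves the gcd, so $\gcd(n',r',m') = e$ as well. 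Thus both sides of (ii) expand over the same divisor set $\delta\mid e$, and it suffices to match the terms $\alpha_f(nm/\delta^2, r/\delta, N)$ with $\alpha_f(n'm'/\delta^2, r'/\delta, N)$. The pair $(nm/\delta^2, r/\delta, 1)$ has discriminant $4nmN/\delta^2 - r^2/\delta^2 = (4nmN - r^2)/\delta^2$, which is the similarity invariant of $T$ divided by $\delta^2$, hence invariant under $T\mapsto T[V_d^{-1}]$; so the two index-$N$ forms being compared share the same discriminant. It then remains to check that their $r$-coordinates $r/\delta$ and $r'/\delta$ satisfy the congruences in (iii), which follows because $V_d^{-1}$ and $V_d$ act on these "reduced" binary forms through the same congruence classes modulo $2d$ and $2N/d$ — concretely, one verifies that the action of $\left(\begin{smallmatrix} n & r/2 \\ r/2 & mN\end{smallmatrix}\right)\mapsto$ its primitive part commutes, up to these congruences, with the substitution \eqref{gl_6}. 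Once the $r$-coordinates are matched, (iii) gives $\alpha_f(nm/\delta^2, r/\delta, N) = \varepsilon\,\alpha_f(n'm'/\delta^2, r'/\delta, N)$ for each $\delta\mid e$, and summing recovers (ii).

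A cleaner route to (iii) $\Rightarrow$ (ii) may be to phrase everything through the Jacobi form $\varphi(f) = f_N\in J_{k,N}$ under Gritsenko's isomorphism: the operator $f\mapsto f\mid_k F_d^{-1}$ corresponds to a well-understood Atkin–Lehner-type involution $W_d$ on $J_{k,N}$, and $f$ is a $W_d$-eigenform with eigenvalue $\varepsilon$ iff $f_N$ is, iff the coefficients $c(D,r)$ of $f_N$ satisfy $c(D,r) = \varepsilon\, c(D,r')$ for $r\equiv -r'\bmod 2d$, $r\equiv r'\bmod 2N/d$ (same $D$); the passage between coefficients of $f$ and of $f_N$ is exactly \eqref{gl_2}. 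I would mention this as a remark but carry out the direct computation, since it keeps the argument self-contained and the only genuinely delicate point — that \eqref{gl_6} preserves the gcd and acts compatibly with the $F_d$-congruences on the discriminant-$D$ slice — is handled uniformly by the $SL_3(\Z)$ observation already in the proof of Lemma \ref{lemma 2} together with a short reduction of the middle row of \eqref{gl_6} modulo $2d$ and $2N/d$.
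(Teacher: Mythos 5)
Your proposal follows essentially the same route as the paper's proof: (i)$\Leftrightarrow$(ii) by uniqueness of the Fourier expansion; reduction of (ii) to the slice $m=1$ via the Maaß relations \eqref{gl_2} together with the fact that the substitution \eqref{gl_6} lies in $SL_3(\Z)$ and hence preserves $\gcd(n,r,m)$; and then the congruence computation $(\alpha d+\gamma N/d)r\equiv -r\bmod{2d}$, $\equiv r\bmod{2N/d}$, which is exactly \eqref{gl_7}. Your detailed treatment of (iii)$\Rightarrow$(ii) — matching $\alpha_f(nm/\delta^2,r/\delta,N)$ with $\alpha_f(n'm'/\delta^2,r'/\delta,N)$ term by term, using equality of discriminants and $r'/\delta\equiv(\alpha d+\gamma N/d)(r/\delta)\bmod{2N}$ (valid because $\delta$ divides $n$ and $m$) — is precisely the content of the paper's remark that by \eqref{gl_2} it suffices to require (ii) for matrices with $m=1$.

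One inaccuracy in your (ii)$\Rightarrow$(iii) step should be fixed: for $T=\left(\begin{smallmatrix}n&r/2\\ r/2&N\end{smallmatrix}\right)$ the matrix $T[V_d^{-1}]$ does \emph{not} have $m'=1$ in general (by \eqref{gl_6}, $m'=(N/d)n-\alpha r+\alpha^2 d$), so ``just specialization'' does not let you read off a relation between two index-$N$ coefficients. You must first rewrite $\alpha_f(T[V_d^{-1}])$ as an index-$N$ coefficient, either via \eqref{gl_2} with $\gcd(n',r',m')=1$, or, as the paper does, via the invariance $\alpha_f\left(T\left[\begin{smallmatrix}1&0\\ u&1\end{smallmatrix}\right]\right)=\alpha_f(T)$, which shows that for fixed discriminant the coefficients $\alpha_f(\,\cdot\,,r,N)$ depend only on $r\bmod{2N}$. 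The same observation, combined with the Chinese remainder theorem (since $\gcd(d,N/d)=1$, the two congruences in (iii) determine $r'$ modulo $2N$), is what lets you pass from the single image pair produced by (ii) to an arbitrary pair $(n',r')$ admitted in (iii). Both repairs use only tools you already invoke elsewhere in the proposal, so the argument goes through and coincides with the paper's proof.
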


\begin{proof}
Clearly (i) and (ii) are equivalent. In view of \eqref{gl_2} it suffices to require (ii) for all matrices $T$ with $m=1$. By
\[
 \alpha_f\left(T\begin{bmatrix}
                 1 & 0  \\  u & 1
                \end{bmatrix}\right)
= \alpha_f (T)\;\; \text{for}\;\; u\in \Z
\]
we apply \eqref{gl_6} in order to get for $T=\left(\begin{smallmatrix}
                                                    n & r/2 \\  r/2 & N
                                                   \end{smallmatrix}\right)$
\[
 \alpha_f\left(T[V^{-1}_d]\right) = \alpha_f\bigl(n'',(\alpha d+\gamma N/d)r,N\bigr)
\]
where $4n'' N-(\alpha d+\gamma N/d)^2 r^2 = 4nN-r^2$. In view of 
\[
 \alpha d+\gamma N/d = 1+ 2\gamma N/d = 2\alpha d-1
\]
we conclude
\begin{gather*}\tag{7}\label{gl_7}
(\alpha d+\gamma N/d)r\equiv \begin{cases}
                             -r & \mod{2d}\,, \\
                             r & \mod{2N/d}\,.
                            \end{cases}
\end{gather*}
Thus (ii) becomes equivalent with (iii).
\end{proof}

If we have $\varepsilon = 1$ for all $d$ the consequence is

\begin{corollary} 
A modular form $f\in \Mcal^*_k(\Sigma_N)$, $N\in \N$ squarefree, belongs to $\Mcal_k(\widehat\Sigma_N)$ if and only if there exists a function $\alpha^*_f:\N_0\to \C$ such that for all $T\geq 0$, $T\neq 0$
\[
 \alpha_f(T) = \sum_{\delta\mid gcd(n,r,m)} \delta^{k-1} \alpha^*_f \bigl((4N\det T)/\delta^2\bigr).
\]
\end{corollary}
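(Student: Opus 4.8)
The plan is to obtain the Corollary from the preceding theorem, applied with $\varepsilon=1$ to every divisor $d\mid N$. Since $\widehat\Sigma_N=\langle\Sigma_N,F_d;\,d\mid N\rangle$ and $F_d^2\in\Sigma_N$, a form $f\in\Mcal^*_k(\Sigma_N)$ lies in $\Mcal_k(\widehat\Sigma_N)$ exactly when $f\underset{k}{\mid}F_d=f$, i.e.\ by Lemma~\ref{lemma 2} when $f\bigl(Z[V^{tr}_d]\bigr)=f(Z)$, for all $d\mid N$; by the preceding theorem this amounts to the following condition $(*_d)$ holding for every $d\mid N$: $\alpha_f(n,r,N)=\alpha_f(n',r',N)$ whenever $4nN-r^2=4n'N-r'^2$, $r\equiv-r'\pmod{2d}$ and $r\equiv r'\pmod{2N/d}$. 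For the direction ``$\Leftarrow$'' this is immediate: if such an $\alpha^*_f$ exists, then taking $m=1$ (so that $\gcd(n,r,1)=1$) in the formula of the Corollary gives $\alpha_f(n,r,N)=\alpha^*_f\bigl(N(4nN-r^2)\bigr)$, a quantity depending only on $4nN-r^2$, so every $(*_d)$ holds and $f\in\Mcal_k(\widehat\Sigma_N)$.

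For ``$\Rightarrow$'' assume all the $(*_d)$. Since $f_N=\varphi(f)$ is a Jacobi form of index $N$, the coefficient $\alpha_f(n,r,N)$ depends only on the pair $\bigl(4nN-r^2,\,r\bmod 2N\bigr)$; thus for each $D\geq0$ there is a well-defined function $g_D$ on the finite set $R_D:=\{\,r\bmod 2N:\ r^2\equiv-D\pmod{4N}\,\}$ with $g_D(r\bmod 2N)=\alpha_f(n,r,N)$ whenever $4nN-r^2=D$. For $d\mid N$ let $\sigma_d$ be the permutation of $\Z/2N\Z$ with $\sigma_d(r)\equiv-r\pmod{2d}$ and $\sigma_d(r)\equiv r\pmod{2N/d}$ (well defined and involutive, since $-r\equiv r$ modulo $\gcd(2d,2N/d)=2$); an elementary computation shows $\sigma_d(r)^2\equiv r^2\pmod{4N}$, so $\sigma_d$ restricts to $R_D$ and $(*_d)$ says $g_D\circ\sigma_d=g_D$. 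The key point is that $\langle\sigma_d;\,d\mid N\rangle$ acts transitively on $R_D$: by the Chinese Remainder Theorem $R_D\cong\prod_{p\mid 2N}R_D^{(p)}$, where $R_D^{(p)}$ is the set of solutions of $x^2\equiv-D$ modulo the highest power of $p$ dividing $4N$, read modulo the highest power of $p$ dividing $2N$; one checks that each $R_D^{(p)}$ is empty, a singleton, or a two-element set $\{a,-a\}$, and that $\sigma_d$ acts on this product by negating exactly the coordinates at the primes dividing $d$. Running $d$ through the prime divisors of $N$ therefore realizes every coordinatewise sign change, and these act transitively on $\prod_p R_D^{(p)}$, so $g_D$ is constant. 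Let $\alpha^*_f(ND)$ be the constant value of $g_D$ for each $D\geq0$ with $R_D\neq\emptyset$, and set $\alpha^*_f(\ell):=0$ for all remaining $\ell\in\N_0$. Now for any $T=\left(\begin{smallmatrix}n&r/2\\ r/2&mN\end{smallmatrix}\right)\geq0$ with $T\neq0$, the Maaß relation \eqref{gl_2} gives $\alpha_f(T)=\sum_{\delta\mid\gcd(n,r,m)}\delta^{k-1}\,\alpha_f\bigl(nm/\delta^2,r/\delta,N\bigr)$; since $4N\det T=N(4nmN-r^2)$ and $4(nm/\delta^2)N-(r/\delta)^2\geq0$, each summand equals $\alpha^*_f\bigl((4N\det T)/\delta^2\bigr)$, which is precisely the asserted identity.

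The main obstacle is the transitivity claim; everything else is bookkeeping with the Maaß relation and Lemma~\ref{lemma 2}/the preceding theorem. Within that claim the one point needing care is the prime $2$, where the $2$-adic valuations of $2N$ and $4N$ differ by more than $1$: one must verify directly that the solution set of $x^2\equiv-D$ modulo $4$ (if $2\nmid N$) or modulo $8$ (if $2\mid N$), read one power of $2$ lower, is still at most a two-element set $\{a,-a\}$, and that a suitable $\sigma_d$ negates that $2$-coordinate while fixing the others (when $N$ is odd this coordinate is simply a singleton). Granting this, the coordinatewise argument goes through verbatim.
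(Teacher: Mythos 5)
Your proof is correct and follows essentially the same route as the paper: both reduce the statement to Theorem~2 with $\varepsilon=1$ together with the elementary fact that $r^2\equiv r'^2\pmod{4N}$ forces $r\equiv\pm r'$ locally at each prime of $2N$, so that a suitable $d\mid N$ (your transitivity of the group generated by the $\sigma_d$ is exactly the paper's explicit construction of that $d$, with the same care at the prime $2$). You additionally spell out the easy converse direction and the final bookkeeping with the Maaß relation, which the paper leaves implicit.
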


\begin{proof}
It suffices to show that for $f\in \Mcal^*_k(\Sigma_N)\cap \Mcal_k(\widehat\Sigma_N)$
\[
 \alpha_f(n,r,N) = \alpha_f(n',r',N)
\]
holds, whenever
\[
 4nN -r^2 = 4n'N-r'^2.
\]
Elementary number theory shows that $r^2\equiv r'^2 \!\! \mod{4N}$ implies
\begin{align*}
 r & \equiv -r'\bmod{p}\quad \text{or}\quad r\equiv r'\bmod{p}\quad \text{for alle primes $p\mid N$ and $p=2$}  \\
 r & \equiv -r'\bmod{4}\quad \text{or}\quad r\equiv r'\bmod{4}\quad \text{if $N$ is even}.
\end{align*}
Thus we define
\begin{align*}
& d:=\prod_{\substack{p\mid N,\,p\,odd \\ r\equiv -r'\bmod{p}}}p\quad \text{if $N$ is odd or $N$ is even and $r\equiv r'\bmod{4}$},  \\
& d:=2\prod_{\substack{p\mid N,\,p\,odd \\ r\equiv -r'\bmod{p}}}p\quad \text{if $N$ is even and $r\equiv -r'\bmod{4}$}.
\end{align*}
Thus we get
\[
 (\alpha d + \gamma t/d) r\equiv r'\bmod{2N}
\]
in \eqref{gl_7}. Now the claim follows from Theorem 2.
\end{proof}

We give  a characterization of cusp forms in $\Mcal_k(\widehat{\Sigma}_N)$, i.e. only positive definite $T$ appear in the Fourier expansion. If $\phi$ denotes the Siegel $\phi$-operator we get 

\begin{lemma}\label{lemma 3} 
Given a squarefree $N\in\N$ and $f\in \Mcal_k(\widehat{\Sigma}_N)$ the following assertions are equivalent:
\begin{enumerate}[(i)]
\item $f$ is a cusp form.
\item $\alpha_f(n,0,0)=0$ for all $n\in\N_0$.
\item $f\mid \phi\equiv 0$.
\end{enumerate}
\end{lemma}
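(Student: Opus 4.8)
The plan is to prove the three equivalences in the cycle (i) $\Rightarrow$ (ii) $\Rightarrow$ (iii) $\Rightarrow$ (i), exploiting that $f\in\Mcal_k(\widehat\Sigma_N)$ means $f$ is invariant under all the $F_d$, $d\mid N$, so that its Fourier coefficients satisfy the symmetry from Corollary 2 and depend (up to the divisor sum) only on $4N\det T$.

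First I would dispose of the easy implications. The implication (i) $\Rightarrow$ (ii) is immediate: a cusp form has $\alpha_f(T)=0$ unless $T>0$, and the matrices $\left(\begin{smallmatrix} n & 0\\ 0 & 0\end{smallmatrix}\right)$ have $\det=0$, so in particular $\alpha_f(n,0,0)=0$. For (iii) $\Rightarrow$ (i) recall that $f\mid\phi\equiv 0$ says precisely that all Fourier coefficients $\alpha_f(T)$ with $T$ singular (i.e. $\det T=0$, $T\geq 0$) vanish; by the standard theory of the Siegel $\phi$-operator this is equivalent to $f$ being a cusp form, so this direction is essentially a restatement. (If one prefers to avoid quoting that, one can argue directly: $f\mid\phi$ computes the limit $\lim_{t\to\infty} f\!\left(\begin{smallmatrix}\tau & 0\\ 0 & it\end{smallmatrix}\right)$, whose Fourier coefficients are exactly the $\alpha_f(n,0,0)$, and vanishing of all of them together with the boundary behaviour at the other cusps — reached via the $F_d$ and elements of $\Sigma_N$, all of which $f$ is invariant under — forces all rank-$\leq 1$ coefficients to vanish.)

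The substantive step is (ii) $\Rightarrow$ (iii), equivalently the assertion that $\alpha_f(n,0,0)=0$ for all $n$ forces $\alpha_f(T)=0$ for every singular $T\neq 0$. Here is where I would use the full strength of $f\in\Mcal_k(\widehat\Sigma_N)$. By \eqref{gl_2} it suffices to treat primitive singular $T=\left(\begin{smallmatrix} n & r/2\\ r/2 & mN\end{smallmatrix}\right)$ with $4nmN=r^2$; for such $T$ the reduction theory under $\Sigma_N$ (together with the extra involutions $F_d$) lets me transform $T$ into a singular matrix supported on the first coordinate. Concretely, a singular $T\geq 0$ has rank $\leq 1$, hence $T = v\,v^{tr}$ for a suitable real vector $v$, and its $GL_2(\Z)$-equivalence class (respecting the paramodular structure) is detected by the "content" $4N\det T/\gcd^2$, which is $0$. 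Corollary 2 then tells me $\alpha_f(T)=\sum_{\delta\mid\gcd(n,r,m)}\delta^{k-1}\alpha^*_f(0)$ for every singular $T\neq 0$, so all these coefficients are governed by the single number $\alpha^*_f(0)$; and choosing $T=\left(\begin{smallmatrix} n & 0\\ 0 & 0\end{smallmatrix}\right)$ (content $0$, $\gcd(n,0,0)=n$) gives $\alpha_f(n,0,0)=\sigma_{k-1}(n)\,\alpha^*_f(0)$, which vanishes for all $n$ by hypothesis (take $n=1$) precisely when $\alpha^*_f(0)=0$. Hence every singular Fourier coefficient vanishes, i.e. $f$ is a cusp form, giving (ii) $\Rightarrow$ (i) directly and a fortiori (ii) $\Rightarrow$ (iii).

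I expect the main obstacle to be the bookkeeping in the singular case: verifying that every rank-$1$ positive semidefinite $T$ with entries of the prescribed shape is, after the divisor normalisation in \eqref{gl_2} and an application of a suitable $F_d$ or $\Sigma_N$-transformation, reduced to the diagonal form $\left(\begin{smallmatrix} n & 0\\ 0 & 0\end{smallmatrix}\right)$, so that Corollary 2 genuinely collapses all singular coefficients onto the single value $\alpha^*_f(0)$. This is the point where squarefreeness of $N$ and the maximal normal extension $\widehat\Sigma_N$ are used in an essential way; everything else is formal.
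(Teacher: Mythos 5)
Your reduction of the lemma to the single nontrivial implication --- that $\alpha_f(n,0,0)=0$ for all $n$ forces the vanishing of every singular Fourier coefficient, i.e.\ (ii) $\Rightarrow$ (i) --- is correct, and you even name the right mechanism (moving a rank-one $T$ to diagonal form using $\Sigma_N$ together with the involutions $F_d$). But the argument you actually give for that step is not admissible: you invoke \eqref{gl_2} and Corollary 2, both of which presuppose $f\in\Mcal^*_k(\Sigma_N)$, whereas Lemma \ref{lemma 3} assumes only $f\in\Mcal_k(\widehat{\Sigma}_N)$. The function $\alpha^*_f$ of Corollary 2 simply does not exist for a general $f\in\Mcal_k(\widehat{\Sigma}_N)$. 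This is not a removable convenience: the lemma is applied in the proof of Theorem \ref{theorem 4} to $g=f-E_{k,N}$, which at that stage is \emph{not} yet known to lie in the Maa{\ss} space (that is precisely what is being proved there), so an argument valid only for Maa{\ss} forms would make the main application circular.

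The paper's proof does exactly what your first instinct suggests and nothing more: for any $T=\left(\begin{smallmatrix} n & r/2\\ r/2 & mN\end{smallmatrix}\right)\geq 0$ with $\det T=0$ and $T\neq 0$ it exhibits $U\in\Gamma^{\circ}(N)V_d$ for a suitable $d\mid N$ with $T[U]=\left(\begin{smallmatrix} t & 0\\ 0 & 0\end{smallmatrix}\right)$, $t=\gcd(n,r,m)$; invariance of $f$ under $\widehat{\Sigma}_N$ (Theorem 2 with $\varepsilon=1$, i.e.\ $\alpha_f(T[V_d^{-1}])=\alpha_f(T)$) then gives $\alpha_f(T)=\alpha_f(t,0,0)$, and hypothesis (ii) kills all singular coefficients at once. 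No Maa{\ss} relation is needed. To repair your write-up you must (a) drop the appeal to \eqref{gl_2} and Corollary 2, and (b) actually carry out the reduction of a rank-one $T$ of the prescribed shape to $\diag(t,0)$ under $\Gamma^{\circ}(N)V_d$ --- this is where squarefreeness of $N$ enters and is the only real content of the proof; your current text asserts it in passing and then routes around it. Your (iii) $\Rightarrow$ (i) paragraph has the same issue in disguise: $f\mid\phi\equiv 0$ is by definition equivalent to (ii), not to (i), so ``essentially a restatement'' begs the question, and the hand-waved ``boundary behaviour at the other cusps'' is exactly the $T[U]$ computation you still owe.
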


\begin{proof}
Given $T=\left(\begin{smallmatrix}
n & r/2  \\  r/2 & mN
\end{smallmatrix}\right)\geq 0$ 
with $\det T = 0$ and $T\neq 0$ there exists $U\in \Gamma^{\circ}(N) V_d$ for some $d\mid N$ such that
\[
 T[U] = \begin{pmatrix}
         t & 0  \\  0 & 0
        \end{pmatrix}, \quad t=gcd(n,r,m).
 \vspace*{-6ex} \\      
\]
\end{proof}
\vspace{2ex}
\begin{remarkson}
a) As we may choose $V_N = \left(\begin{smallmatrix}
                                   0 & N  \\  -1 & 0
                                  \end{smallmatrix}\right)$, 
we conclude
\[
 f \underset{k}{\mid} W_N = (-1)^k f
\]
for all $f\in \Mcal^*_k(\Sigma_N)$ in view of $\alpha_f(n,r,mM) =\alpha_f (m,r,nN)$ in \eqref{gl_2}.  \\
b) $\Sigma_N$ corresponds to the discriminant kernel in $S\Ocal(2,3)$ (cf. \cite{K1}). Moreover the extension $\Sigma_N\cup \Sigma_N W_N$ corresponds to the discriminant kernel in $\Ocal(2,3)$. \\
c) With $V_N = \left(\begin{smallmatrix}
               0 & N  \\  -1 & 0
              \end{smallmatrix}\right)$ one easily verifies
\[
 W_N(M\times M')W_N = M'\times M
\]
for $M,M'\in SL_2(\R)$ in \eqref{gl_4}. Thus one gets
\[
 f \underset{k}{\mid} \Tcal^{\downarrow}_n = f \underset{k}{\mid} W_N \underset{k}{\mid}  \Tcal^{\uparrow}_n \underset{k}{\mid} W_N.
\]
\end{remarkson}

\section{Hecke operators}

Consider the subgroup
\[
 \G:=\left\{\frac{1}{\sqrt{n}}M\in\Sp_2(\R);\,n\in\N,\,M\in\Z^{4\times 4}\right\}
\]
of $\Sp_2(\R)$ as well as its Jacobi subgroup
\[
 \G^J:= \left\{\begin{pmatrix}
                * & 0 & * & * \\ * & * & * & * \\ * & 0 & * & * \\ 0 & 0 & 0 & *
               \end{pmatrix}
\in\G\right\}.
\]
Then $(\Sigma_N,\G)$, $(\widehat{\Sigma}_N,\G)$ and $(\Sigma^J_N,\G^J)$ are Hecke pairs (cf. \cite{A}, \cite{F}, \cite{K2}). Denote the associated Hecke algebras by
\[
 \Hcal_N:=\Hcal(\Sigma_N,\G),\quad \widehat{\Hcal}_N:=\Hcal(\widehat{\Sigma}_N,\G), \quad \Hcal^J_N: = \Hcal(\Sigma^J_N,\G^J).
\]
The structure of $\Hcal_N$ and $\widehat{\Hcal}_N$ was described in \cite{GK} and moreover the 
\[
 \widehat{\Sigma}_N \cap \G^J = \widehat{\Sigma}_N, \quad \G\subset \widehat{\Sigma}_N \cdot \G^J.
\]
Thus the mapping
\[
 \sum_{M:\widehat{\Sigma}_N\backslash \G} t(\widehat{\Sigma}_N M)\widehat{\Sigma}_N M\mapsto 
 \sum_{M:\Sigma^J_N\backslash \G^J} t(\widehat{\Sigma}_N M)\Sigma^J_N M
\]
induces an injective homomorphism of the Hecke algebras 
\begin{gather*}\tag{8}\label{gl_8}
 \iota:\widehat{\Hcal}_N \to \Hcal^J_N
\end{gather*}
(cf. \cite{A}, Proposition 3.1.6, \cite{K2}, I(6.4)).

\begin{theorem}\label{theorem 3} 
Let $N\in\N$ be squarefree. Then the Maaß space $\Mcal^*_k(\Sigma_N)$ is invariant under the Hecke algebra $\Hcal_N$.
\end{theorem}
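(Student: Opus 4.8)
The plan is to reduce the statement about the full Hecke algebra $\Hcal_N$ to the criterion already available, namely the characterization of the Maaß space in Theorem 1 via the operators $\Tcal_p^{\uparrow}$ and $\Tcal_p^{\downarrow}$. Let $f \in \Mcal^*_k(\Sigma_N)$ and let $\xi \in \Hcal_N$; we must show $f \underset{k}{\mid} \xi$ again lies in the Maaß space. Since $\Hcal_N$ is generated by finitely many double cosets, it suffices to treat a single double coset $\Sigma_N M \Sigma_N$ with $M \in \G$, and it suffices to show that $f \underset{k}{\mid} \Sigma_N M \Sigma_N$ satisfies the hypothesis of Theorem 1, i.e. that $(f\underset{k}{\mid}\Sigma_N M \Sigma_N) \underset{k}{\mid} \Tcal_p^{\uparrow} = (f\underset{k}{\mid}\Sigma_N M \Sigma_N)\underset{k}{\mid}\Tcal_p^{\downarrow}$ for all primes $p$ outside some finite set $S$.

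The key structural point is that $\Tcal_p^{\uparrow}$ and $\Tcal_p^{\downarrow}$ come from the $SL_2 \times SL_2$ embedding (\ref{gl_4}), so for a prime $p$ not dividing the "denominator" of $M$ (the $n$ with $M \in \frac{1}{\sqrt n}\Z^{4\times4}$) nor $N$, these operators should commute, up to correction terms living in the Jacobi Hecke algebra, with the double coset operator for $M$. Concretely, I would work inside the Hecke pair $(\Sigma_N^J, \G^J)$: by Lemma 1(iv) the equality $\Tcal_p^{\uparrow} = \Tcal_p^{\downarrow}$ on $f$ is equivalent to a relation among elements of $\Hcal_N^J$ applied to the Fourier–Jacobi components of $f$, and the embedding $\iota$ of (\ref{gl_8}) together with the description of $\widehat{\Hcal}_N$ and $\Hcal_N$ from \cite{GK} lets one express the action of $\Sigma_N M \Sigma_N$ on Fourier–Jacobi components as an element of $\Hcal_N^J$. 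Thus both $\Tcal_p^{\uparrow}-\Tcal_p^{\downarrow}$ and the $\xi$-action become elements of the commutative algebra $\Hcal_N^J$ (at least after restricting to primes away from the relevant bad set), so they commute; since $f$ is killed by $\Tcal_p^{\uparrow}-\Tcal_p^{\downarrow}$ for all $p$ by Corollary 1, so is $f \underset{k}{\mid} \xi$ for all but finitely many $p$, and Theorem 1 finishes the argument.

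An alternative, more hands-on route avoids the Hecke-algebra machinery: pick for the generators of $\Hcal_N$ explicit coset representatives and compute directly the effect on Fourier coefficients $\alpha_f(n,r,mN)$, checking the identity (iii) of Lemma 1 for the transformed form at primes $p \notin S$. This is conceptually transparent but computationally heavy; I would only fall back to it for the finitely many generators not covered by the clean argument, or to pin down which finite exceptional set $S$ (built from $N$ and the denominators of the generators) actually occurs. In either approach one should remember Remark c) after Lemma 3, $f \underset{k}{\mid} \Tcal_n^{\downarrow} = f\underset{k}{\mid} W_N \underset{k}{\mid}\Tcal_n^{\uparrow}\underset{k}{\mid} W_N$, which, combined with $W_N$-equivariance of $\Hcal_N$ (this is where squarefreeness and the maximal normal extension $\widehat\Sigma_N$ enter), reduces checking $\Tcal_p^{\uparrow}=\Tcal_p^{\downarrow}$ on $f\underset{k}{\mid}\xi$ to a single commutation statement rather than two.

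The main obstacle I expect is precisely this commutation step: showing that the double coset operator for a generator of $\Hcal_N$ commutes with $\Tcal_p^{\uparrow}$ (equivalently, with the image under $\iota$ of the relevant generator of $\widehat\Hcal_N$) for all primes $p$ outside a fixed finite set. This requires knowing enough of the explicit structure of $\Hcal_N$ and $\widehat\Hcal_N$ from \cite{GK} — in particular that, away from the bad primes dividing $N$, the local components split off a factor isomorphic to the relevant $GL_2$-type Hecke algebra that contains the classes of $\Tcal_p^{\uparrow}$ — and then invoking commutativity of that local algebra. Handling the bad primes $p \mid N$ is unnecessary because they can simply be absorbed into $S$, which is exactly why Theorem 1, rather than Corollary 1, is the right tool to cite at the end.
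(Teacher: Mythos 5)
Your overall skeleton matches the paper's: reduce to the criterion of Theorem 1, transfer the Maaß condition into the Jacobi setting via Lemma 1(iv) and the embedding $\iota$ of \eqref{gl_8}, and conclude by a commutation statement valid for all primes $p$ outside a finite set. But the decisive step is exactly the one you leave unproved, and the justification you offer for it is not sound. You argue that both the $\xi$-action and the Maaß condition ``become elements of the commutative algebra $\Hcal^J_N$, so they commute.'' The paper nowhere asserts that $\Hcal^J_N$ is commutative; commutativity is only invoked (citing \cite{GK}) for $\widehat{\Hcal}_N$, and it is precisely because commutation inside $\Hcal^J_N$ is \emph{not} automatic that the paper carries out a ``tedious straightforward calculation,'' using the explicit right-coset representatives \eqref{gl_9} and \eqref{gl_10}, to show that $\iota(T_N(q))$ and $\iota(T^*_N(q))$ commute with the three specific double cosets occurring in Lemma 1(iv) — and only for $p\neq q$. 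Also note that $\Tcal_p^{\uparrow}-\Tcal_p^{\downarrow}$ itself is not a Hecke operator on $\Mcal_k(\Sigma_N)$ (the paper stresses this after \eqref{gl_5}); the correct object is the element of $\Hcal^J_N$ given by Lemma 1(iv), acting on Fourier–Jacobi data. So the heart of the theorem — the commutation — is asserted in your write-up on grounds that are unsupported (and, in your fallback paragraph, deferred to unspecified structure results in \cite{GK}), which is a genuine gap rather than a proof.

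Two further points of divergence. First, the paper does not work with an arbitrary double coset of $\Hcal_N$: it first reduces to $\widehat{\Hcal}_N$ via Lemma \ref{lemma 2} (the maps $f\mapsto f\underset{k}{\mid}F_d^{-1}$ preserve the Maaß space), because \cite{GK} provides explicit generators $T_N(q)$, $T^*_N(q)$ of $\widehat{\Hcal}_N$ together with coset representatives; without this reduction your ``explicit computation'' fallback has no concrete list of generators to work with. Second, your opening claim that $\Hcal_N$ is generated by finitely many double cosets is false (there are generators at every prime $q$); it is harmless, since linearity lets you treat one double coset at a time, but the finite exceptional set $S$ in Theorem 1 is then $\{q\}$ for the generator at $q$, not a set built once and for all from $\xi$.
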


\begin{proof}
According to Lemma \ref{lemma 2} it suffices to demonstrate the invariance under $\widehat{\Hcal}_N$. It was shown in \cite{GK} that $\widehat{\Hcal}_N$ is generated by 
\[
 T_N(q):= \widehat{\Sigma}_N \tfrac{1}{\sqrt{q}} \diag(1,1,q,q)\widehat{\Sigma}_N \quad \text{and}\quad T^*_N(q):= \widehat{\Sigma}_N \tfrac{1}{q} \diag(1,q,q^2,q)\widehat{\Sigma}_N, \;\;q\in\Pb.
\]
According to \cite{GK} representatives of the right cosets in $T_N(q)$ are given by
\begin{gather*}\tag{9}\label{gl_9}
\begin{split}
 & \frac{1}{\sqrt{q}}\begin{pmatrix}
                    q & 0 & 0 & 0  \\  0 & q & 0 & 0  \\  & & 1 & 0  \\  & & 0 & 1
                   \end{pmatrix},\;
\frac{1}{\sqrt{q}}\begin{pmatrix}
1 & 0 & a & 0  \\  0 & q & 0 & 0 \\  & & q & 0  \\  & & 0 & 1
\end{pmatrix} ,  \\
& \frac{1}{\sqrt{q}}\begin{pmatrix}
1 & 0 & a & b  \\  0 & 1 & b & c/N  \\  & & q & 0  \\  & & 0 & q
\end{pmatrix},\;
\frac{1}{\sqrt{q}}\begin{pmatrix}
q & 0 & 0 & 0  \\  -d & 1 & 0 & c/N \\  & & 1 & d  \\  & & 0 & q
\end{pmatrix}
\end{split}
\end{gather*}
for $a,b,c,d \bmod{q}$, if $q\nmid N$. This means
\[
 \iota(T_N(q))=\Sigma^J_N\,\frac{1}{\sqrt{q}} \diag(1,q,q,1) \Sigma^J_N + \Sigma^J_N\, \frac{1}{\sqrt{q}} \diag(1,1,q,q)\Sigma^J_N.
\]
If $q\mid N$ additionally by
\[
 \frac{1}{q} \begin{pmatrix}
              q & 0 & 0 & b  \\  -d & q & b & 0  \\  & & q & d  \\  & & 0 & q
             \end{pmatrix} \in\Sigma^J_N \,\frac{1}{q}
             \begin{pmatrix}
              q & 0 & 0 & 1  \\  0 & q & 1 & 0  \\  & & q & 0  \\  & & 0 & q
             \end{pmatrix} \Sigma^J_N
\]
for $b,d \bmod{q}$, $(b,d)\not\equiv(0,0)\bmod{q}$ appear. Representatives of the right cosets in $T^*_N(q)$ are given by
\begin{gather*}\tag{10}\label{gl_10}
\begin{split}
 & \frac{1}{q}\begin{pmatrix}
                    q & 0 & 0 & 0  \\  0 & q^2 & 0 & 0  \\  & & q & 0  \\  & & 0 & 1
                   \end{pmatrix},\;
\frac{1}{q}\begin{pmatrix}
q^2 & 0 & 0 & 0  \\  -qd & q & 0 & 0  \\  & & 1 & d  \\  & & 0 & q
\end{pmatrix},\;
\frac{1}{q}\begin{pmatrix}
1 & 0 & a & b  \\  0 & q & qb & 0 \\  & & q^2 & 0  \\  & & 0 & q
\end{pmatrix}, \\
& \frac{1}{q}\begin{pmatrix}
q & 0 & u & ub  \\  0 & q & ub & ub^2  \\  & & q & 0  \\  & & 0 & q
\end{pmatrix},\; \frac{1}{q}\begin{pmatrix}
q & 0 & 0 & qb  \\  -d & 1 & b & c/N \\  & & q & qd  \\  & & 0 & q^2
\end{pmatrix},\; \frac{1}{q} \begin{pmatrix}
 q & 0 & 0 & 0  \\  0 & q & 0 & u/N  \\  & & q & 0  \\  & & 0 & q
\end{pmatrix}
\end{split}
\end{gather*}
for $b,d,u \bmod{q}$, $u\not\equiv 0\bmod{q}$ and $a,c \bmod{q^2}$. This means
\begin{align*}
 \iota(T^*_N(q)) = & \Sigma^J_N \,\tfrac{1}{q}\diag(q,q^2,q,1)\,\Sigma^J_N + \Sigma^J_N \,\tfrac{1}{q}\diag(1,q,q^2,q)\,\Sigma^J_N \\
		   & + \Sigma^J_N \,\tfrac{1}{q}\diag(q,1,q,q^2)\,\Sigma^J_N + \sum_{\substack{u\bmod q\\u\,\not\equiv\, 0\bmod q}} \Sigma^J_N 
		   \begin{pmatrix}
		    1 & 0  \\  0 & 1
		   \end{pmatrix} \times \begin{pmatrix}
					1 & u/q  \\  0 & 1
					\end{pmatrix} \Sigma^J_N.
\end{align*}
If $q\mid N$ additionally the right cosets
\begin{align*}
& \frac{1}{q}\begin{pmatrix}
q & 0 & 0 & b  \\  0 & q & b & c/N \\  & & q & 0  \\  & & 0 & q
\end{pmatrix},\quad 
\begin{matrix}
 b,c \bmod{q},\hspace*{2em} \\ b,c \not\equiv 0 \bmod{q},\;
\end{matrix}     \\
& \frac{1}{q}\begin{pmatrix}
q & 0 & 0 & b  \\  -d & q & b & c/N \\  & & q & d  \\  & & 0 & q
\end{pmatrix},\quad 
\begin{matrix}
 b,c,d \bmod{q},\;\;  \\ cd \not\equiv 0 \bmod{q},
\end{matrix}  \\
& \frac{1}{q\sqrt{q}}\begin{pmatrix}
q^2 & 0 & 0 & 0  \\  -qd & q^2 & 0 & 0 \\  & & q & d  \\  & & 0 & q
\end{pmatrix},\;
\frac{1}{q\sqrt{q}}\begin{pmatrix}
q & 0 & qa & b  \\  0 & q^2 & qb & 0  \\  & & q^2 & 0  \\  & & 0 & q
\end{pmatrix},\quad 
\begin{matrix}
 a,b,d \bmod{q},\;\; \\ bd \not\equiv 0 \bmod{q},
\end{matrix}   \\
& \frac{1}{q\sqrt{q}}\begin{pmatrix}
q^2 & 0 & 0 & qb  \\  -qd & q & b & bd+qc/N  \\  & & q & qd  \\  & & 0 & q^2
\end{pmatrix},\quad 
\begin{matrix}
 b,c,d \bmod{q}, \\ b \not\equiv 0 \bmod{q},
\end{matrix}  \\
& \frac{1}{q\sqrt{q}}\begin{pmatrix}
q & 0 & qa & qb  \\  -d & q & -ad+qb & bd+qc/N  \\  & & q^2 & qd  \\  & & 0 & q^2
\end{pmatrix},\quad 
\begin{matrix}
 a,b,c,d \bmod{q}, \\ d \not\equiv 0 \bmod{q}.\hspace*{1em} 
\end{matrix} 
\end{align*}
If $q\mid N$ therefore $\iota(T^*_n(q))$ equals
\begin{align*}
 & \Sigma^J_N \, \frac{1}{q} \diag(q,q^2,q,1)\,\Sigma^J_N + \Sigma^J_N \, \frac{1}{q} \diag(1,q,q^2,q)\,\Sigma^J_N + 
 \Sigma^J_N \, \frac{1}{q} \diag(q,1,q,q^2)\,\Sigma^J_N  \\[1ex]
 & + \sum_{\substack{u\bmod q\\u\,\not\equiv\, 0\bmod q}} \left[\Sigma^J_N \, \frac{1}{q}
			\begin{pmatrix}
                         q & 0 & 0 & 0  \\  0 & q & 0 & u/N  \\  & & q & 0  \\  & & 0 & q
                        \end{pmatrix}\Sigma^J_N 
 + \Sigma^J_N \, \frac{1}{q\sqrt{q}}
			\begin{pmatrix}
                         q^2 & 0 & 0 & q  \\  0 & q & 1 & qu/N  \\  & & q & 0  \\  & & 0 & q^2
                        \end{pmatrix}\Sigma^J_N \right] \\[1ex]
 & + \Sigma^J_N \, \frac{1}{q\sqrt{q}}
			\begin{pmatrix}
                         q & 0 & 0 & 1  \\  0 & q^2 & q & 0  \\  & & q^2 & 0  \\  & & 0 & q
                        \end{pmatrix}\Sigma^J_N  .                    
\end{align*}
A tedious straightforward calculation shows that $\iota(T_N(q))$ and $\iota(T^*_N(q))$ commute with
\[
 \Sigma^J_N \tfrac{1}{\sqrt{p}}\diag(p,p,1,1)\Sigma^J_N, \quad \Sigma^J_N \tfrac{1}{p} \diag(p,p^2,p,1) \Sigma^J_N
\]
and 
\[
 \sum_{u\bmod{p}} \Sigma^J_N \, I\times \begin{pmatrix}
                                         1 & u/p  \\  0 & 1
                                        \end{pmatrix}
 \Sigma^J_N,
\]
as long as $p\neq q$. Thus the claim follows from Theorem \ref{theorem 1}.
\end{proof}

Now we apply the Hecke operator $T_N(q)$ to the Fourier Jacobi expansion. 

\begin{corollary}\label{corollary 3} 
Let $f\in\Mcal^*_k (\Sigma_N)$ for squarefree $N\in\N$ and $q\in\Pb$, $q\nmid N$. Then 
\[
g:=f\underset{k}{\mid} T_N(q) \in \Mcal^*_k(\Sigma_N)
\]
is characterized by
\[
 g^*_N = f^*_N \underset{k}{\mid} \Sigma^J_N\,\tfrac{1}{q} \diag(1,q,q^2,q) \Sigma^J_N + (q^2+q) f^*_N.
\]
\end{corollary}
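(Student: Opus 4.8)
The plan is the following. By Theorem \ref{theorem 3} the form $g=f\underset{k}{\mid}T_N(q)$ already lies in $\Mcal^*_k(\Sigma_N)$, so under Gritsenko's isomorphism $\varphi$ it is completely determined by its first Fourier--Jacobi coefficient $g^*_N$; hence it suffices to identify $g^*_N$. I would compute it directly from the right coset decomposition \eqref{gl_9} of $T_N(q)$, which is available since $q\nmid N$: apply $f\mapsto f\underset{k}{\mid}M$ to each of the four families of representatives, insert the Fourier expansion of $f$, carry out the sums over the free parameters $a,b,c,d\bmod q$, and read off the coefficient of $e^{2\pi iN\tau'}$.

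The first point is that the two families headed by $\tfrac1{\sqrt q}\diag(q,q,1,1)$ and by $\tfrac1{\sqrt q}\left(\begin{smallmatrix}1&0&a&0\\0&q&0&0\\&&q&0\\&&0&1\end{smallmatrix}\right)$ replace $\tau'$ by $q\tau'$; every Fourier term they produce thus has $\tau'$-frequency a non-negative multiple of $qN$ and contributes nothing to the coefficient of $e^{2\pi iN\tau'}$. For the third and fourth families an explicit computation of $M\langle Z\rangle$ shows that the $\tau'$-frequency occurring is $mN/q$, so summing over $c\bmod q$ forces $q\mid m$, and only $m=q$ survives. Hence $g^*_N$ is built solely from the index-$qN$ Fourier--Jacobi coefficient of $f$: the third family yields a multiple of $\sum_{n,r}\alpha_f(qn,qr,qN)\,e^{2\pi i(n\tau+rz+N\tau')}$, while the fourth yields $q\sum_{d\bmod q}\sum_{n,r}\alpha_f(n,r,qN)\,e^{2\pi i((qn-rd+Nd^2)\tau+(r-2Nd)z+N\tau')}$.

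Now I would invoke the Maaß relation \eqref{gl_2}. As $q$ is prime, $\alpha_f(n,r,qN)=\alpha_f(qn,r,N)$ unless $q\mid n$ and $q\mid r$, in which case there is an extra summand $q^{k-1}\alpha_f(n/q,r/q,N)$. Collecting the ``$\delta=q$'' contributions: the third family contributes $q^2f^*_N$ outright; the fourth family, after one uses the index-$N$ periodicity $\alpha_f(a,b,N)=\alpha_f(a+\lambda b+\lambda^2N,b+2\lambda N,N)$ to absorb the shift by $d$ and then counts, for each Fourier index $(n,r)$, the $d\bmod q$ with $Nd^2+rd+n\equiv 0\bmod q$ (there are $1+\bigl(\tfrac{r^2-4Nn}{q}\bigr)$ of them for odd $q$, and the analogous Kronecker value for $q=2$), contributes $qf^*_N$ plus a sum twisted by that quadratic symbol. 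The remaining ``$\delta=1$'' contributions of the two families, together with that twisted sum, reorganise — once more via the periodicity relation — into $f^*_N\underset{k}{\mid}\Sigma^J_N\,\tfrac1q\diag(1,q,q^2,q)\,\Sigma^J_N$; to see this one writes out a full set of right coset representatives of $\Sigma^J_N\,\tfrac1q\diag(1,q,q^2,q)\,\Sigma^J_N$ (which can be isolated from the list \eqref{gl_10} in the proof of Theorem \ref{theorem 3}), slashes $f^*_N$ with each, and compares Fourier coefficients. Summing gives $g^*_N=f^*_N\underset{k}{\mid}\Sigma^J_N\,\tfrac1q\diag(1,q,q^2,q)\,\Sigma^J_N+(q^2+q)f^*_N$, as claimed.

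The main obstacle is precisely this last reorganisation: one must show termwise that the ``$\delta=1$'' part of $g^*_N$ together with the twisted remainder equals $f^*_N\underset{k}{\mid}\Sigma^J_N\,\tfrac1q\diag(1,q,q^2,q)\,\Sigma^J_N$, and that the scalar multiple of $f^*_N$ comes out to be exactly $q^2+q$. This entails keeping the unnormalised Hecke-operator conventions straight, handling the degenerate frequencies $n=0$ or $r=0$ with care (since \eqref{gl_2} is only asserted for $T\neq 0$ and the admissible $T$ are constrained differently there), and verifying that the quadratic count goes through uniformly at $q=2$ via Kronecker symbols. Everything else is a direct, if lengthy, unwinding of the definitions.
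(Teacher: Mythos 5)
Your overall strategy is viable and is in essence the paper's: the paper likewise observes that only the Jacobi double coset $\Sigma^J_N\,\tfrac{1}{\sqrt q}\diag(1,1,q,q)\,\Sigma^J_N$ (the third and fourth families in \eqref{gl_9}) can contribute to the index-$N$ Fourier Jacobi coefficient, so that $g^*_N=f^*_{qN}\underset{k}{\mid}\Sigma^J_N\,\tfrac{1}{\sqrt q}\diag(1,1,q,q)\,\Sigma^J_N$, and then uses Gritsenko's operator form of the Maaß condition, $f^*_{qN}=\tfrac1q\,f^*_N\underset{k}{\mid}\Sigma^J_N\,\tfrac{1}{\sqrt q}\diag(q,q,1,1)\,\Sigma^J_N$, and multiplies the two Jacobi double cosets using the representatives in \eqref{gl_9} and \eqref{gl_10}. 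You replace that last step by the coefficient relation \eqref{gl_2}; this is an equivalent and workable route. Your computation of the contributions of the third and fourth families (the prefactors $q^{3-k}$ and $q$, the exponents $(qn-rd+Nd^2,\,r-2Nd)$, and the fact that only the index $qN$ survives) is correct.

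The gap is in the regrouping, which you declare to be the main obstacle and do not carry out, and which as stated is wrong. The $\delta=q$ part of the fourth family is $q^{k}\sum_{d\bmod q}\sum_{n,r}\alpha_f(n,r,N)\,e^{2\pi i((q^2n-qrd+Nd^2)\tau+(qr-2Nd)z+N\tau')}$, and this equals $\sum_{d\bmod q}f^*_N\underset{k}{\mid}\tfrac1q\left(\begin{smallmatrix}q^2&0&0&0\\-qd&q&0&0\\0&0&1&d\\0&0&0&q\end{smallmatrix}\right)$, i.e.\ it reproduces one of the three coset families of $\Sigma^J_N\,\tfrac1q\diag(1,q,q^2,q)\,\Sigma^J_N$ listed in \eqref{gl_10} (cf.\ the proof of Corollary \ref{corollary 5}); it is \emph{not} $qf^*_N$ plus a symbol-twisted sum. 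It is the $\delta=1$ part of the fourth family which, via the periodicity $\alpha_f(a,b,N)=\alpha_f(a-db+d^2N,\,b-2dN,\,N)$, becomes $q\sum_{n,r}\alpha_f(n,r,N)\,\sharp\{d\bmod q:\,Nd^2+rd+n\equiv0\bmod q\}\,e^{2\pi i(n\tau+rz+N\tau')}$, i.e.\ $qf^*_N$ plus the twisted sum, the twisted part being exactly the contribution of the cosets $\tfrac1q\left(\begin{smallmatrix}q&0&u&ub\\0&q&ub&ub^2\\0&0&q&0\\0&0&0&q\end{smallmatrix}\right)$, $u\not\equiv0\bmod q$. Consequently your asserted identity ``remaining $\delta=1$ contributions of the two families plus the twisted sum $=f^*_N\underset{k}{\mid}\Sigma^J_N\,\tfrac1q\diag(1,q,q^2,q)\,\Sigma^J_N$'' is false: it would force the $\delta=1$ and $\delta=q$ parts of the fourth family to coincide, which they do not (they involve $\alpha_f$ at discriminants differing by a factor $q^2$). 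The correct matching is: third family $\delta=1$ $\leftrightarrow$ the cosets with $a\bmod q^2$, $b\bmod q$; fourth family $\delta=q$ $\leftrightarrow$ the cosets with $d\bmod q$; fourth family $\delta=1$ $=qf^*_N\,+$ the $u,b$-cosets; third family $\delta=q$ $=q^2f^*_N$. With this repaired bookkeeping the calculation closes and yields the stated formula, so the defect is a concrete mis-assignment (plus the fact that the decisive verification is only asserted), not an unsalvageable approach.
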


\begin{proof}
It follows from Gritsenko's work \cite{G1} that 
\[
 f^*_{qN} = \tfrac{1}{q} f^*_N \underset{k}{\mid} \Sigma^J_N \,\tfrac{1}{\sqrt{q}} \diag(q,q,1,1)\Sigma^J_N.
\]
Thus the application of \eqref{gl_9} leads to
\begin{align*}
g^*N & = f^*_{qN} \underset{k}{\mid} \Sigma^J_N\,\tfrac{1}{\sqrt{q}} \diag(1,1,q,q)\Sigma^J_N  \\
 & = \tfrac{1}{q} f^*_N \underset{k}{\mid} \Sigma^J_N\,\tfrac{1}{\sqrt{q}}\diag(q,q,1,1)\Sigma^J_N \cdot \Sigma^J_N\,\tfrac{1}{\sqrt{q}} \diag(1,1,q,q)\Sigma^J_N .
\end{align*}
If we use the representatives in \eqref{gl_9} and \eqref{gl_10}, the calculation of the product leads to the result.
\end{proof}

Note that $\widehat{\Sigma}_N M \widehat{\Sigma}_N = \widehat{\Sigma}_N M^{-1} \widehat{\Sigma}_N$ for all $M\in\G$ due to \cite{GK}. Hence the same arguments as in \cite{F}, IV.4.11, show that the Hecke operators are self-adjoint with respect to the Petersson scalar product. As $\widehat{\Hcal}_N$ is commutative due to \cite{GK}, the standard procedure yields

\begin{corollary}\label{corollary 4} 
Let $N\in\N$ be squarefree. Then the space $\Mcal^*_k(\Sigma_N)\cap \Mcal_k(\widehat{\Sigma}_N)$ contains a basis consisting of simultaneous eigenforms under all Hecke operators in $\widehat{\Hcal}_N$.
\end{corollary}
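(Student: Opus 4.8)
I would deduce Corollary~\ref{corollary 4} from the standard spectral-theory machinery applied to the commuting family $\widehat{\Hcal}_N$ acting on the finite-dimensional Hilbert space $\Mcal^*_k(\Sigma_N)\cap\Mcal_k(\widehat{\Sigma}_N)$. The three ingredients that need to be assembled are: (a) that this space is indeed invariant under $\widehat{\Hcal}_N$, so the action makes sense; (b) that every operator in $\widehat{\Hcal}_N$ is self-adjoint with respect to the Petersson scalar product; and (c) that $\widehat{\Hcal}_N$ is commutative. Given (a)--(c), the conclusion is the classical linear-algebra fact that a commuting family of self-adjoint operators on a finite-dimensional inner-product space is simultaneously diagonalizable.

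First I would verify (a): by Theorem~\ref{theorem 3} the Maaß space $\Mcal^*_k(\Sigma_N)$ is $\Hcal_N$-invariant, and a fortiori $\widehat{\Hcal}_N$-invariant once one notes (as the remark following \eqref{gl_8} already records) that $\widehat{\Hcal}_N$ embeds into the Jacobi Hecke algebra and its operators are built from the same double cosets; invariance of $\Mcal_k(\widehat{\Sigma}_N)$ under $\widehat{\Hcal}_N$ is immediate since $\widehat{\Hcal}_N=\Hcal(\widehat{\Sigma}_N,\G)$ by definition, so the intersection is preserved. Next, for (b) I would invoke the relation $\widehat{\Sigma}_N M\widehat{\Sigma}_N=\widehat{\Sigma}_N M^{-1}\widehat{\Sigma}_N$ for $M\in\G$, established in \cite{GK} and quoted in the note preceding this corollary; combined with the behaviour of the Petersson product under $M\mapsto M^{-1}$ this gives self-adjointness by the argument of \cite{F}, IV.4.11, exactly as the note states. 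For (c), commutativity of $\widehat{\Hcal}_N$ is again cited from \cite{GK}. Since $\Mcal^*_k(\Sigma_N)\cap\Mcal_k(\widehat{\Sigma}_N)$ is finite-dimensional and the Petersson product is positive definite on it (cusp forms, or more precisely the whole space once one recalls that the Eisenstein part is one-dimensional and fixed and can be handled separately), the spectral theorem for commuting self-adjoint operators produces the desired basis of simultaneous eigenforms.

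**Main obstacle.** The genuinely substantive point is ensuring that the Petersson scalar product is well-defined and positive definite on all of $\Mcal^*_k(\Sigma_N)\cap\Mcal_k(\widehat{\Sigma}_N)$, not merely on its cuspidal part — equivalently, that the Eisenstein-type contribution does not spoil the orthogonal-diagonalization argument. One clean way around this is to split the space as (one-dimensional Eisenstein line) $\oplus$ (cusp forms): by Lemma~\ref{lemma 3} and Corollary~2 the cusp forms in $\Mcal_k(\widehat{\Sigma}_N)$ are cut out by $\alpha_f(n,0,0)=0$, the Eisenstein complement is spanned by a single Hecke eigenform (the Siegel–Eisenstein series, by the paper's stated application), each summand is $\widehat{\Hcal}_N$-invariant, the Petersson product converges and is positive definite on the cusp forms, and the spectral theorem applies there; the one-dimensional piece is trivially a simultaneous eigenspace. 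Assembling the two bases gives the claim. The remaining steps — checking self-adjointness via $M\mapsto M^{-1}$, and invoking commutativity — are routine once the cited facts from \cite{GK} and \cite{F} are in hand.
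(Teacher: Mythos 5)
Your proposal is correct and follows essentially the same route as the paper, which likewise combines self-adjointness (from $\widehat{\Sigma}_N M\widehat{\Sigma}_N=\widehat{\Sigma}_N M^{-1}\widehat{\Sigma}_N$ and \cite{F}, IV.4.11), commutativity of $\widehat{\Hcal}_N$ from \cite{GK}, and the spectral theorem for commuting self-adjoint operators; the paper compresses all of this into the remark preceding the corollary and the words ``the standard procedure''. One small caution: your parenthetical identification of the non-cuspidal line with the Siegel--Eisenstein series appeals to Theorem~\ref{theorem 4}, whose proof uses this very corollary, so you should drop that identification and rely only on the fact (which follows from Lemma~\ref{lemma 1}(iii) and Lemma~\ref{lemma 3}) that the non-cuspidal part is an invariant subspace of dimension at most one, hence automatically a simultaneous eigenline.
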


\section{The Eisenstein series}

Let
\[
 \Sigma_{N,\infty}:= \left\{\begin{pmatrix}
                             A & B  \\  0 & D
                            \end{pmatrix}\in\Sigma_N\right\}
\]
and define for even $k>3$ the Siegel-Eisenstein series
\[
 E_{k,N}(Z):= \sum_{M:\Sigma_{N,\infty}\backslash \Sigma_N} \det(CZ+D)^{-k}.
\]
We have
\[
 W^{-1}_d \Sigma_{N,\infty}W_d = \Sigma_{N,\infty} \quad \text{and}\quad W^{-1}_d\Sigma_N W_d = \Sigma_N
\]
and therefore $E_{k,N}\underset{k}{\mid} W_d = E_{k,N}$ for all $d\mid N$. Hence  $E_{k,N}\in\Mcal_k(\widehat{\Sigma}_N)$ holds. We use an analog of Elstrodt's result \cite{E} in order to obtain

\begin{theorem}\label{theorem 4} 
 Let $N\in\N$ be squarefree and $k>3$ be even. Then
 \[
  E_{k,N} \in \Mcal^*_k (\Sigma_N)\cap \Mcal_k(\widehat{\Sigma}_N).
 \]
\end{theorem}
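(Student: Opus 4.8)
The plan is to show that $E_{k,N}$ satisfies the hypothesis of Theorem \ref{theorem 1}: namely that $E_{k,N}\underset{k}{\mid}\Tcal_p^{\uparrow}=E_{k,N}\underset{k}{\mid}\Tcal_p^{\downarrow}$ for all but finitely many primes $p$. Since we already know $E_{k,N}\in\Mcal_k(\widehat{\Sigma}_N)$, membership in the Maaß space is then automatic from Theorem \ref{theorem 1}. First I would make the operators $\Tcal_p^{\uparrow}$ and $\Tcal_p^{\downarrow}$ visible on $E_{k,N}$ through the embedding \eqref{gl_4}: both are built from the action of $SL_2$-cosets in one of the two slots, and the key structural fact is that $E_{k,N}$ is essentially induced from the identity coset of the Klingen-type parabolic $\Sigma_{N,\infty}$. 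The analog of Elstrodt's argument (referenced as \cite{E}) is exactly the device that lets one compute $E_{k,N}\underset{k}{\mid}\Tcal_p$ by a coset bookkeeping: one pulls the $\Tcal_p$ summation inside the Eisenstein sum $\sum_{M:\Sigma_{N,\infty}\backslash\Sigma_N}$, re-decomposes the resulting double-coset space $\Sigma_{N,\infty}\backslash\Sigma_N\cdot(M\times I)$ or $\Sigma_{N,\infty}\backslash\Sigma_N\cdot(I\times M)$ into $\Sigma_{N,\infty}$-cosets, and recognizes the outcome as a scalar multiple of $E_{k,N}$ again (plus possibly lower Eisenstein series attached to smaller level, which vanish or match up).

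The core of the argument is the following symmetry: by Remark c) after Lemma \ref{lemma 3} we have $f\underset{k}{\mid}\Tcal_n^{\downarrow}=f\underset{k}{\mid} W_N\underset{k}{\mid}\Tcal_n^{\uparrow}\underset{k}{\mid} W_N$ for every $f\in\Mcal^*_k(\Sigma_N)$; but the relevant identities $W_N^{-1}\Sigma_{N,\infty}W_N=\Sigma_{N,\infty}$ and $W_N^{-1}\Sigma_N W_N=\Sigma_N$ (stated just before Theorem \ref{theorem 4}) give $E_{k,N}\underset{k}{\mid} W_N=E_{k,N}$, so $E_{k,N}\underset{k}{\mid}\Tcal_p^{\downarrow}=E_{k,N}\underset{k}{\mid} W_N\underset{k}{\mid}\Tcal_p^{\uparrow}\underset{k}{\mid} W_N=E_{k,N}\underset{k}{\mid}\Tcal_p^{\uparrow}$, provided $W_N$ conjugates the coset decomposition of $\Tcal_p^{\uparrow}$ correctly and $\Tcal_p^{\uparrow}$ is actually a well-defined operator preserving $\Mcal_k(\Sigma_N)$ — which for $p\nmid N$ it is, once one also conjugates by a suitable $W_d$ to handle the level. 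So the finite exceptional set $S$ should be taken to be the set of prime divisors of $N$ (together with any primes where the auxiliary normalization in Elstrodt's identity degenerates).

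Concretely the steps are: (1) fix $p\nmid N$; (2) write out $E_{k,N}\underset{k}{\mid}\Tcal_p^{\uparrow}$ by interchanging the two sums, obtaining $\sum_{M}\sum_{M':\Sigma_{N,\infty}\backslash\Sigma_N}\det(\cdots)^{-k}$ over products $M'\cdot(M\times I)$; (3) identify the image of $\Sigma_{N,\infty}\backslash\Sigma_N\cdot(\Gamma\backslash\Mcal(p)\times I)$ with a finite union of cosets, getting $E_{k,N}\underset{k}{\mid}\Tcal_p^{\uparrow}=\lambda_p E_{k,N}$ for an explicit eigenvalue $\lambda_p$; (4) repeat for $\Tcal_p^{\downarrow}$, or simply invoke the $W_N$-symmetry of step described above, to conclude $\lambda_p$ is the same in both cases; (5) apply Theorem \ref{theorem 1} with $S=\{q\in\Pb: q\mid N\}$. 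The main obstacle I expect is step (3): the coset computation for the paramodular group is genuinely more delicate than for $SL_2$ or $\Sp_2(\Z)$ because $\Sigma_N$ has the asymmetric $N$-scaling built into it, so one must track how $\Mcal(p)$-representatives interact with the $\ast N$ and $\ast/N$ entries, and verify that no "exotic" cosets appear that would spoil the clean proportionality to $E_{k,N}$. This is exactly where the squarefreeness of $N$ and the restriction $p\nmid N$ are used, and where a careful (if routine) case analysis, modeled on \cite{E}, is unavoidable.
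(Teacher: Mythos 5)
Your plan hinges on step (3): that unfolding the Eisenstein sum against the coset sum defining $\Tcal_p^{\uparrow}$ yields $E_{k,N}\underset{k}{\mid}\Tcal_p^{\uparrow}=\lambda_p E_{k,N}$. This is the genuine gap, and it is not a repairable bookkeeping issue: as the paper notes right after \eqref{gl_5}, $\Tcal_p^{\uparrow}$ and $\Tcal_p^{\downarrow}$ are \emph{not} Hecke operators on $\Mcal_k(\Sigma_N)$. The slashed sum $\sum_{M:\Gamma\backslash\Mcal(p)}E_{k,N}\underset{k}{\mid}(M\times I)$ is not $\Sigma_N$-invariant, and its Fourier expansion (see the proof of Lemma \ref{lemma 1}) is supported on frequencies $e^{2\pi i(n\tau+rz/\sqrt{p}+mN\tau')}$, i.e.\ on a different lattice in the $z$-variable; a proportionality to $E_{k,N}$ would force almost all Fourier coefficients with $r\neq 0$ to vanish, which is absurd. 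Consequently the unfolding in steps (2)--(3) does not close up into $\Sigma_{N,\infty}$-cosets of $\Sigma_N$, and no eigenvalue $\lambda_p$ exists. Your fallback via Remark c) is circular: from $E_{k,N}\underset{k}{\mid}W_N=E_{k,N}$ one gets $E_{k,N}\underset{k}{\mid}\Tcal_p^{\downarrow}=\bigl(E_{k,N}\underset{k}{\mid}\Tcal_p^{\uparrow}\bigr)\underset{k}{\mid}W_N$, so the desired identity is exactly the statement that $E_{k,N}\underset{k}{\mid}\Tcal_p^{\uparrow}$ is $W_N$-invariant, which is neither known nor easier than what you set out to prove. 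Verifying Lemma \ref{lemma 1}(ii) directly for $E_{k,N}$ would instead require explicit knowledge of its Fourier coefficients, a much heavier computation than your sketch acknowledges.

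The paper argues in the opposite direction, and this is also what the reference to Elstrodt actually means (an eigenvalue-size criterion, not an unfolding device). One starts from a Gritsenko lift $f\in\Mcal^*_k(\Sigma_N)$ with nonzero constant term, symmetrizes by the $W_d$ (Lemma \ref{lemma 2}) to land in $\Mcal_k(\widehat{\Sigma}_N)$, and uses Corollary \ref{corollary 4} — which rests on Theorem \ref{theorem 3}, i.e.\ on the invariance of the Maaß space under the genuine Hecke algebra $\widehat{\Hcal}_N$ — to take $f$ a normalized eigenform of $T_N(q)$ for one prime $q\nmid N$. Since $E_{k,N}$ is a $T_N(q)$-eigenform with eigenvalue $\lambda=q^k+q^2+q+q^{3-k}$ and $g:=f-E_{k,N}$ is a cusp form (Lemmas \ref{lemma 1} and \ref{lemma 3}, comparison under the $\phi$-operator), the cusp-form eigenvalue bound $|\lambda|\leq 1+q+q^2+q^3<q^4\leq q^k$ forces $g=0$, hence $E_{k,N}=f$ lies in the Maaß space. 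So the correct mechanism is comparison with a known Maaß lift via honest Hecke eigenvalues, not an eigen-equation for the auxiliary operators $\Tcal_p^{\uparrow},\Tcal_p^{\downarrow}$.
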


\begin{proof}
As $J_{k,N}$ contains a non-cusp form, we conclude the existence of an $f\in\Mcal^*_k (\Sigma_N)$ with non-zero constant Fourier coefficient from \cite{G1}. Due to Lemma \ref{lemma 2} we may assume $f\in\Mcal_k(\widehat{\Sigma}_N)$ if we replace $f$ by $\sum\limits_{d\mid N} f \underset{k}{\mid} W_d$. By Corollary \ref{corollary 4} we may even assume 
\[
 \alpha_f(0,0,0) = 1 \;\;\text{and}\;\; f \underset{k}{\mid} \widehat{\Sigma}_N \,\tfrac{1}{\sqrt{q}} \diag(1,1,q,q) \widehat{\Sigma}_N = \lambda f
\]
for some prime $q\nmid N$ and eigenvalue $\lambda$. As $E_{k,N}$ is an eigenform, too, (cf. \cite{F}, IV.4.7) with the same eigenvalue (cf. \cite{F}, IV.4.6)
\[
 \lambda = q^k + q^2 + q + q^{3-k},
\]
this is also true for $g:=f-E_{k,N}$. Using Lemma \ref{lemma 1} we conclude that $f\mid \phi$ and $E_{k,N}\mid \phi$ coincide with the normalized elliptic Eisenstein series.  

Hence $g$ is a cusp form by Lemma \ref{lemma 3}. If $g\neq 0$, we get (cf. \cite{F}, IV.4.8) 
\[
 |\lambda| \leq \sharp\left(\Sigma_N\backslash \Sigma_N \tfrac{1}{\sqrt{q}} \diag(1,1,q,q)\Sigma_N\right) = 1+q+q^2+q^3 < q^4.
\]
In view of $\lambda \geq q^k\geq q^4$ we obtain a contradiction and therefore
\[
 f = E_{k,N}.
 \vspace*{-5ex}
\]
\end{proof}

If $k\geq 4$ is even we define the Jacobi Eisenstein series by
\[
e^*_{k,N}(Z):= \tfrac{1}{4} \sum_{M:\Gamma_N \backslash \Sigma^J_N} \phi_N \underset{k}{\mid} M, \quad 
\phi_N(Z):= e^{2\pi\,iN\tau'},
\]
where
\[
 \Gamma_N:=\left\{\begin{pmatrix}
                   I & \ast  \\  0 & I
                  \end{pmatrix} \in \Sigma_N\right\}
\]
is the translation subgroup of $\Sigma^J_N$. Note that
\[
 \alpha_{e^*_{k,N}} (0,0,N) = 1.
\]

\begin{corollary}\label{corollary 5} 
Let $k\geq 4$ be even, $N\in\N$ squarefree, $q\in\Pb$, $q\nmid N$. Then 
\[
 e^*_{k,N} \underset{k}{\mid} \Sigma^J_N \,\tfrac{1}{q} \diag(1,q,q^2,q)\Sigma^J_N = (q^k + q^{3-k}) e^*_{k,N}.
\]
\end{corollary}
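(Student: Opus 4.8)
The plan is to deduce Corollary~\ref{corollary 5} from Corollary~\ref{corollary 3} by transporting the statement about the Siegel-Eisenstein series $E_{k,N}$ on $\H_2$ down to its first Fourier-Jacobi coefficient. First I would invoke Theorem~\ref{theorem 4}, which gives $E_{k,N}\in\Mcal^*_k(\Sigma_N)\cap\Mcal_k(\widehat\Sigma_N)$, and combine it with Theorem~\ref{theorem 3}: since $\Mcal^*_k(\Sigma_N)$ is invariant under $\Hcal_N$, the form $g:=E_{k,N}\underset{k}{\mid}T_N(q)$ again lies in the Maaß space, so Corollary~\ref{corollary 3} applies and yields
\[
 g^*_N = (E_{k,N})^*_N \underset{k}{\mid} \Sigma^J_N\,\tfrac{1}{q}\diag(1,q,q^2,q)\,\Sigma^J_N + (q^2+q)\,(E_{k,N})^*_N .
\]
The point is then to identify $(E_{k,N})^*_N$ with $e^*_{k,N}$ up to the known normalization. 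By \eqref{gl_3} and the normalization $\alpha_{e^*_{k,N}}(0,0,N)=1$, together with Gritsenko's isomorphism $\varphi\colon\Mcal^*_k(\Sigma_N)\to J_{k,N}$, the Jacobi form $(E_{k,N})_N$ is, up to a nonzero scalar $c$, the unique Jacobi-Eisenstein series $e^*_{k,N}$; comparing the $(0,0,N)$-Fourier coefficient via \eqref{gl_3} pins down $c$.

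Next I would compute the eigenvalue of $E_{k,N}$ under $T_N(q)$. By Theorem~\ref{theorem 4} the form $E_{k,N}$ agrees with the normalized Maaß lift, and by Corollary~\ref{corollary 4} (or directly by \cite{F}, IV.4.6–4.7, as used in the proof of Theorem~\ref{theorem 4}) it is an eigenform of $\widehat\Sigma_N\tfrac{1}{\sqrt q}\diag(1,1,q,q)\widehat\Sigma_N$ with eigenvalue $\lambda = q^k + q^2 + q + q^{3-k}$. Since $T_N(q)=\widehat\Sigma_N\tfrac1{\sqrt q}\diag(1,1,q,q)\widehat\Sigma_N$, we get $g=\lambda\,E_{k,N}$, hence $g^*_N=\lambda\,(E_{k,N})^*_N$. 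Substituting into the displayed identity from Corollary~\ref{corollary 3} and cancelling the common scalar $c$ gives
\[
 (q^k+q^2+q+q^{3-k})\,e^*_{k,N} = e^*_{k,N}\underset{k}{\mid}\Sigma^J_N\,\tfrac1q\diag(1,q,q^2,q)\,\Sigma^J_N + (q^2+q)\,e^*_{k,N},
\]
and subtracting $(q^2+q)e^*_{k,N}$ from both sides yields exactly the claimed $(q^k+q^{3-k})e^*_{k,N}$.

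The main obstacle is the bookkeeping in the second paragraph: one must be careful that the scalar relating $(E_{k,N})^*_N$ and $e^*_{k,N}$ is the same on both sides of the identity from Corollary~\ref{corollary 3}, which it is because that identity is linear and homogeneous in $f^*_N$, so any nonzero normalizing constant drops out — and because $J_{k,N}$ is known (from \cite{G1}, or from the non-vanishing of $e^*_{k,N}$ guaranteed by $\alpha_{e^*_{k,N}}(0,0,N)=1$) to contain an essentially unique Eisenstein-type element, so the identification is legitimate. A secondary point is to make sure the Hecke operator appearing in Corollary~\ref{corollary 3}, namely $\Sigma^J_N\tfrac1q\diag(1,q,q^2,q)\Sigma^J_N$ acting on $f^*_N$, is literally the same operator on Jacobi forms that appears in the statement of Corollary~\ref{corollary 5}; this is immediate from the definitions since both are defined via coset decomposition under $\Sigma^J_N$ with the same representative $\tfrac1q\diag(1,q,q^2,q)$. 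Everything else is a direct substitution and cancellation.
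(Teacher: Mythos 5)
Your reduction via Corollary~\ref{corollary 3} is an attractive idea, and its first half is sound: from the eigen-equation $E_{k,N}\underset{k}{\mid}T_N(q)=(q^k+q^2+q+q^{3-k})E_{k,N}$ together with Corollary~\ref{corollary 3} one does obtain
\[
 (E_{k,N})^*_N \underset{k}{\mid} \Sigma^J_N\,\tfrac1q\diag(1,q,q^2,q)\,\Sigma^J_N=(q^k+q^{3-k})\,(E_{k,N})^*_N .
\]
The gap is the step where you replace $(E_{k,N})^*_N$ by $c\,e^*_{k,N}$. Equation \eqref{gl_3} only controls $f_0$ and the single coefficient $\alpha_f(0,0,N)$; it does not exclude a cuspidal component of $(E_{k,N})_N$ in $J_{k,N}$, and one Fourier coefficient never determines a Jacobi form. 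That $J_{k,N}$ has an essentially unique Eisenstein element is true for squarefree $N$, but the question is precisely why $(E_{k,N})_N$ is of Eisenstein type. In the paper this identification is Corollary~\ref{corollary 6}\,a), which is stated and proved \emph{after} Corollary~\ref{corollary 5} and relies on it (one matches the eigenvalue of $e^*_{k,N}$ against that of $(E_{k,N})^*_N$ in an eigenbasis), so your argument is circular as written. Moreover, even granting $(E_{k,N})_N=c\,e_{k,N}+h$ with $h$ cuspidal and $c\neq 0$, the displayed eigen-equation only determines the image of $e^*_{k,N}$ under the Hecke operator \emph{modulo cusp forms}; to rule out a cuspidal part of $e^*_{k,N}\underset{k}{\mid}\Sigma^J_N\,\tfrac1q\diag(1,q,q^2,q)\,\Sigma^J_N$ you would need to know that the operator preserves the line $\C\, e^*_{k,N}$, which is essentially the assertion to be proved.

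The paper argues directly instead: since $e^*_{k,N}=\tfrac14\sum\phi_N\underset{k}{\mid}M$ over $\Gamma_N\backslash\Sigma^J_N$, applying the double coset amounts to summing $\phi_N\underset{k}{\mid}L$ over $\Gamma_N\backslash\Sigma^J_N\,\tfrac1q\diag(1,q,q^2,q)\,\Sigma^J_N$, and with the explicit representatives in \eqref{gl_10} the several families of cosets contribute $q^{3-k}\phi_N$, $q^k\phi_N$ and $0$, giving the eigenvalue $q^k+q^{3-k}$ outright. To salvage your route you would have to establish Corollary~\ref{corollary 6}\,a) independently first, e.g.\ by computing the Fourier--Jacobi expansion of $E_{k,N}$ directly.
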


\begin{proof}
By construction $e^*_{k,N}$ is invariant under $\Sigma^J_N$. If $K_i$, $i\in I$, denotes a set of representatives of $\Sigma^J_n\backslash \Sigma^J_N\,\tfrac{1}{q} \diag(1,q,q^2,q)\Sigma^J_N$ contained in \eqref{gl_10} and $M_j$, $j\in J$, of $\Gamma_N\backslash \Sigma^J_N$, then
\[
 e^*_{k,N} \underset{k}{\mid} \Sigma^J_N\,\tfrac{1}{q}\diag(1,q,q^2,q)\Sigma^J_N = \sum_{L:\Gamma_N\backslash \Sigma^J_N\,\frac{1}{q}\diag(1,q,q^2,q)\Sigma^J_N} \phi_N \underset{k}{\mid} L.
\]
Switching to the inverse we see that each $L\in\Sigma^J_N\,\frac{1}{q}\diag (1,q,q^2,q)\Sigma^J_N$ has a unique decomposition
\[
L = \pm K_i^{-1} N M_j,\;\; i\in I,\;j\in J,\; N\in \Gamma_N.
\]
Now observe that
\[
 \left[\frac{1}{q}\begin{pmatrix}
                   q^2 & 0 & 0 & 0  \\ -qd & q & 0 & 0  \\  & & 1 & d  \\  & & 0 & q
                  \end{pmatrix}\right]^{-1}
\Gamma_N = \underset{\substack{a\bmod{q^2} \\ b\bmod{q}}}{\dot\bigcup} \Gamma_N U_d\,\frac{1}{q}
\begin{pmatrix}
1 & 0 & a & b  \\ 0 & q & qb & 0  \\  & & q^2 & 0  \\  & & 0 & q
\end{pmatrix}, \quad U_d = 
\begin{pmatrix}
1 & 0 & 0 & 0  \\ d & 1 & 0 & 0  \\  & & 1 & -d  \\  & & 0 & 1
\end{pmatrix},
\]
\[
\sum_{\substack{a\bmod{q^2} \\ b\bmod{q}}}\phi_N \underset{k}{\mid} U_d \,\frac{1}{q} 
\begin{pmatrix}
1 & 0 & a & b  \\ 0 & q & qb & 0  \\  & & q^2 & 0  \\  & & 0 & q
\end{pmatrix} = \begin{cases}
                 q^{3-k} \phi_N, & \text{if}\;\, d\equiv 0\bmod{q}, \\
                 0, & \text{else}.
                \end{cases}
\]
\begin{align*}
\underset{\substack{a\bmod{q^2} \\ b\bmod{q}}}{\dot\bigcup}\left[\frac{1}{q}
\begin{pmatrix}
1 & 0 & a & b  \\ 0 & q & qb & 0  \\  & & q^2 & 0  \\  & & 0 & q
\end{pmatrix}\right]^{-1} \Gamma_N
 & = \Gamma_N \,\frac{1}{q}\begin{pmatrix}
                   q^2 & 0 & 0 & 0  \\ 0 & q & 0 & 0  \\  & & 1 & 0  \\  & & 0 & q
                  \end{pmatrix},    \\
\phi_N \underset{k}{\mid} \tfrac{1}{q} \diag(q^2,q,1,q)  & = q^k \phi_N,   \\
\left[\frac{1}{q}
\begin{pmatrix}
q & 0 & u & ub  \\ 0 & q & ub & ub^2  \\  & & q & 0  \\  & & 0 & q
\end{pmatrix}\right]^{-1} \Gamma_N  & = \Gamma_N \,\frac{1}{q}
\begin{pmatrix}
q & 0 & -u & -ub  \\ 0 & q & -ub & -ub^2  \\  & & q & 0  \\  & & 0 & q
\end{pmatrix},    \\
\sum_{\substack{u,b\bmod{q} \\ u \,\not\equiv\, 0\bmod{q}}}\phi_N \underset{k}{\mid} \frac{1}{q} 
\begin{pmatrix}
q & 0 & -u & -ub  \\ 0 & q & -ub & -ub^2  \\  & & q & 0  \\  & & 0 & q  
\end{pmatrix} & = 0.
\end{align*}
Hence the claim follows.
\end{proof}

Finally we derive a connection with $E_{k,N}$ and the classical Siegel Eisenstein series $E_{k,1}$.               

\begin{corollary}\label{corollary 6} 
Let $k\geq 4$ be even and $N\in\N$ squarefree.
\begin{enumerate}[a)]
\item The $N$-th Fourier Jacobi coefficient of $E_{k,N}$ is given by 
\[
 -\frac{B_k}{2k} \, e^*_{k,N}.
\]
\item One has 
\[
 \sigma_{k-1} (N)\cdot e^*_{k,N} = \tfrac{1}{N} \, e^*_{k,1} \underset{k}{\mid} \Sigma^J_1\,\tfrac{1}{\sqrt{N}} \diag(1,N,N,1)\Sigma^J_1.
\]
\end{enumerate}
\end{corollary}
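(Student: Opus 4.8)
\textbf{Proof proposal for Corollary \ref{corollary 6}.}

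The plan is to prove part a) first and then derive part b) from it together with Corollary \ref{corollary 3} and Corollary \ref{corollary 5}. For part a), recall that $E_{k,N}\in\Mcal^*_k(\Sigma_N)\cap\Mcal_k(\widehat{\Sigma}_N)$ by Theorem \ref{theorem 4}, so by Gritsenko's isomorphism $\varphi$ its $N$-th Fourier Jacobi coefficient $(E_{k,N})_N = (E_{k,N})^*_N$ lies in $J_{k,N}$ and is determined by its behaviour on the Eisenstein part. Concretely, I would compute the constant term: by \eqref{gl_3} applied to $f=E_{k,N}$ (whose constant Fourier coefficient $\alpha_{E_{k,N}}(0,0,0)$ is $1$, since the $\phi$-operator sends $E_{k,N}$ to the normalized elliptic Eisenstein series $E_k$ as noted in the proof of Theorem \ref{theorem 4}), we get $\alpha_{E_{k,N}}(0,0,N) = -\tfrac{2k}{B_k}$. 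On the other hand $\alpha_{e^*_{k,N}}(0,0,N)=1$ by construction. So $(E_{k,N})^*_N$ and $-\tfrac{B_k}{2k}e^*_{k,N}$ have the same constant term. Both are Jacobi forms of weight $k$ and index $N$; to conclude they are equal I would argue that the Jacobi-Eisenstein series $e^*_{k,N}$ spans the orthogonal complement of the Jacobi cusp forms in the relevant sense, or more directly, that a Jacobi form in $J_{k,N}$ fixed by $\Sigma^J_N$ with prescribed constant term and lying in the image of the Maaß lift is unique up to cusp forms — and that $(E_{k,N})^*_N$, being built from a Siegel Eisenstein series, is itself a Jacobi-Eisenstein series. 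The cleanest route is: $e^*_{k,N}$ is, up to the scalar $\tfrac14$, a sum over $\Gamma_N\backslash\Sigma^J_N$ of $\phi_N\underset{k}{\mid}M$, and unwinding the definition of $E_{k,N}$ via the decomposition of $\Sigma_{N,\infty}\backslash\Sigma_N$ along the Klingen parabolic exhibits $(E_{k,N})^*_N$ literally as such a sum; matching the two sums (and the factor $\tfrac14$ coming from the order-$4$ stabilizer issues) gives a).

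For part b), I would apply the Hecke operator $\Sigma^J_1\,\tfrac{1}{\sqrt N}\diag(1,N,N,1)\Sigma^J_1$ to $e^*_{k,1}$ by factoring it over the primes $q\mid N$: since $N$ is squarefree, $\tfrac{1}{\sqrt N}\diag(1,N,N,1) = \prod_{q\mid N}\tfrac{1}{\sqrt q}\diag(1,q,q,1)$, and the corresponding double cosets multiply (the local Hecke algebras commute, and for distinct primes the product of double cosets is again a single double coset). So it suffices to understand, for a single prime $q\nmid 1$, how $\tfrac{1}{\sqrt q}\diag(1,q,q,1)$ acts. Here the identity from the proof of Corollary \ref{corollary 3}, namely $f^*_{qN} = \tfrac1q f^*_N\underset{k}{\mid}\Sigma^J_N\,\tfrac{1}{\sqrt q}\diag(q,q,1,1)\Sigma^J_N$ together with Remark a)-style conjugation by $W_N$ (which swaps the two "directions"), tells us that $\tfrac{1}{\sqrt q}\diag(1,q,q,1)$ sends the index-$N$ Jacobi-Eisenstein series to $q$ times the index-$qN$ one, i.e.\ relates $e^*_{k,N}$ and $e^*_{k,qN}$. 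Iterating over all $q\mid N$ produces $e^*_{k,1}\mapsto (\text{power of }N)\,e^*_{k,N}$; identifying the scalar as $N\sigma_{k-1}(N)$ requires tracking the constant Fourier coefficient through each step, using $\alpha_{e^*_{k,M}}(0,0,M)=1$ and the multiplicativity $\sigma_{k-1}(N)=\prod_{q\mid N}\sigma_{k-1}(q)=\prod_{q\mid N}(1+q^{k-1})$, the factor $1+q^{k-1}$ being exactly the number of right cosets in $\Sigma^J_1\,\tfrac{1}{\sqrt q}\diag(1,q,q,1)\Sigma^J_1$ acting on the constant term.

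The main obstacle is part b): the bookkeeping of the scalar. One has to be careful that the naive "product of Hecke operators" is only valid because $N$ is squarefree and the relevant local Hecke algebras are polynomial in a single generator, and one must correctly normalize at each prime — the operator $\tfrac{1}{\sqrt q}\diag(1,q,q,1)$ is unnormalized, so it contributes both the arithmetic factor $\sigma_{k-1}(q)$ (from the constant term) and the geometric factor $q$ (from the relation $f^*_{qN}=\tfrac1q f^*_N\underset{k}{\mid}(\cdots)$), and these must be disentangled to yield precisely $N\sigma_{k-1}(N)$ rather than, say, $N^2\sigma_{k-1}(N)$ or $\sigma_{k-1}(N)$ alone. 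A safe way to pin down the constant, avoiding all sign and factor ambiguities, is to compare the $(0,0,N)$-Fourier coefficients of both sides directly: the left side has $\sigma_{k-1}(N)$, and on the right side one computes the coefficient of $e^{2\pi iN\tau'}$ in $\tfrac1N e^*_{k,1}\underset{k}{\mid}\Sigma^J_1\,\tfrac{1}{\sqrt N}\diag(1,N,N,1)\Sigma^J_1$ by listing which right cosets of the double coset, applied to $\phi_1$, produce a term of index exactly $N$ in $\tau'$, getting $\tfrac1N\cdot(\text{number of such cosets}) = \tfrac1N\cdot N\sigma_{k-1}(N)/? $ — and matching forces the stated identity once one also knows both sides lie in the one-dimensional (up to cusp forms) Eisenstein line and agree on that line. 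I would present part b) in this "compare constant terms on both sides" style to keep the argument short.
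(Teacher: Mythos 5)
Your overall strategy for both parts --- pin down the scalar by the $(0,0,N)$ Fourier coefficient, then argue that the two sides agree up to Jacobi cusp forms --- is the right one, but in each part the decisive step, namely excluding a Jacobi \emph{cusp} form component, is precisely what you leave unproved, and it is the point that the paper's cited ingredients exist to supply. For a), the paper combines Theorem \ref{theorem 4} with Corollary \ref{corollary 4}: $E_{k,N}$ is a simultaneous Hecke eigenform, so by Corollary \ref{corollary 3} its $N$-th Fourier--Jacobi coefficient is an eigenform of $\Sigma^J_N\,\tfrac1q\diag(1,q,q^2,q)\,\Sigma^J_N$ with eigenvalue $q^k+q^{3-k}$ for every prime $q\nmid N$; by Corollary \ref{corollary 5} the same holds for $e^*_{k,N}$, and the difference (after matching constant terms via \eqref{gl_3}) would be a Jacobi cusp eigenform with eigenvalue at least $q^k$, which the coset-counting estimate (as in the proof of Theorem \ref{theorem 4}) rules out. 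Your substitutes do not close this gap: ``$e^*_{k,N}$ spans the orthogonal complement of the cusp forms'' does not show that $(E_{k,N})^*_N$ is orthogonal to the cusp forms, and the ``direct unwinding'' of $\Sigma_{N,\infty}\backslash\Sigma_N$ along the Klingen parabolic is a legitimate alternative route but a nontrivial computation you have not carried out. There is also an arithmetic slip: you correctly derive $\alpha_{E_{k,N}}(0,0,N)=-\tfrac{2k}{B_k}$ from \eqref{gl_3}, but then assert that this agrees with the constant term of $-\tfrac{B_k}{2k}\,e^*_{k,N}$, which is $-\tfrac{B_k}{2k}$; these are different numbers. (Your computation in fact indicates that the proportionality constant should be $-\tfrac{2k}{B_k}$, so the discrepancy points to the statement rather than to your method, but it cannot simply be declared a match.)

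For b) the same issue recurs. The paper's proof is short precisely because it reuses the eigenvalue mechanism: the double cosets $\Sigma^J_1\,\tfrac{1}{\sqrt N}\diag(1,N,N,1)\,\Sigma^J_1$ and $\Sigma^J\,\tfrac1q\diag(1,q,q^2,q)\,\Sigma^J$ commute for $q\nmid N$, so the right-hand side is again an eigenform with eigenvalue $q^k+q^{3-k}$, hence a scalar multiple of $e^*_{k,N}$, and the scalar $\sigma_{k-1}(N)$ is then read off from the $(0,0,N)$ coefficient using the explicit representatives in \eqref{gl_9} and \eqref{gl_10}. Your fallback, ``compare constant terms once one knows both sides lie on the Eisenstein line,'' is this argument with its only nontrivial ingredient omitted. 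Your primary route --- factoring $\tfrac{1}{\sqrt N}\diag(1,N,N,1)$ prime by prime and invoking $f^*_{qN}=\tfrac1q f^*_N\underset{k}{\mid}\Sigma^J_N\,\tfrac{1}{\sqrt q}\diag(q,q,1,1)\,\Sigma^J_N$ --- conflates the index-raising operator on Jacobi forms with the Fourier--Jacobi recursion inside a fixed Maa\ss{} lift, and, as you concede, does not pin down the normalization. As written, neither part is a complete proof.
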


\begin{proof}
a) Combine Corollary \ref{corollary 4}, Theorem \ref{theorem 4} and \eqref{gl_3}.  \\
b) Note that $\Sigma^J_N\,\frac{1}{\sqrt{N}} \diag(1,N,N,1)\Sigma^J_N$ and $\Sigma^J_N\,\frac{1}{q} \diag(1,q,q^2,q)\Sigma^J_N$ commute for $q\in\Pb$, $q\nmid N$ and use the representatives above in order to compute the Fourier coefficient of 
$\left(\begin{smallmatrix}
   0 & 0  \\  0 & N 
\end{smallmatrix}\right) $ on the right hand side.                                                                                                                                                                                                      
\end{proof}

Hence the Fourier coefficients of $E_{k,N}$ can be found in \cite{EZ}, \S 2.

\vspace{6ex}

 \nocite{H}  


\bibliography{bibliography_krieg} 
\bibliographystyle{plain}
\end{document}